\def\section{\@startsection{section}{1}%
	\z@{.7\linespacing\@plus\linespacing}{.5\linespacing}%
	{\bfseries
		\centering
}}
\def\@secnumfont{\bfseries}
\newtheorem{theorem}{Theorem}[section]
\newtheorem{corollary}[theorem]{Corollary}
\newtheorem{definition}[theorem]{Definition}
\newtheorem{lemma}[theorem]{Lemma}
\newtheorem{proposition}[theorem]{Proposition}
\newtheorem{remark}[theorem]{Remark}
\numberwithin{equation}{section}
\colorlet{blu1}{blue!70!black}
\colorlet{blu2}{blue!50!black}
\colorlet{blu3}{blue!70!red}
\colorlet{blu4}{blue!60!green}
\colorlet{red1}{red!80}
\colorlet{red2}{red!50!black}
\colorlet{red3}{red!70!yellow}
\colorlet{red4}{red!50!yellow}
\colorlet{yel1}{yellow!50!black}
\colorlet{yel3}{yellow!20!blue}
\colorlet{gre1}{green!60!blue}
\colorlet{gre2}{green!60!black}
\colorlet{gre3}{green!40!black}
\begin{document}

\begin{center}
{\bf\Large The Catalan's triangle system, the Catalan's trapezoids} \vspace{0.2cm}

{\bf\Large and (q,2)--Fock space}\vspace{1.5cm}

{\bf\large Yungang Lu}\vspace{0.8cm}
	
{Department of Mathematics, University of Bari ``Aldo Moro''}\vspace{0.2cm}

{Via E. Orabuna, 4, 70125, Bari, Italy}

\end{center}\vspace{1cm}

\centerline{\bf\large Abstract}\vspace{0.2cm}
\noindent
We provide an explicit formulation for the solution to the Catalan's triangle system using Catalan's trapezoids and a specified boundary condition.  Additionally, we study this system with various boundary conditions obtained by utilizing different types of Fock spaces.
\vspace{1cm}

\section{Introduction}
\label{DCT-Intro}

In this paper, we introduce a specific system of homogenous linear equations, closely linked to {\it Catalan's trapezoids} (as introduced in \cite{Reuveni14}, to the best of the author's knowledge). The Catalan's trapezoids encompass {\it Catalan's triangle} (refer to \cite{Bailey96} and its references) and, more specifically, include the renowned {\it Catalan numbers} as particular cases.

We are motivated to study the previously mentioned system with the goal of determining the cardinality of a particular family of non--crossing pair partitions, as discussed in subsection \ref{DCTsec1-2}.

Recall that, for any $m\in \mathbb{N}^*$, the Catalan's trapezoid of order $m$  is a countable set of natural numbers denoted as $\{C_{m}(n,k)\}_{n\in \mathbb{N},\, 0\le k\le n+m-1}$. Here, for any $m\in \mathbb{N}^*$, $n\in \mathbb{N}$ and $0\le k\le n+m-1$, $C_{m}(n,k)$
represents the number of {\bf strings} composed of $n$ X's and $k$ Y's (or $n$ 1's and $k$ $-1$'s), subject to the following specific condition: in every initial {\bf segment} of the string, the number of Y's does not exceed the number of X's by more than $m$. The explicit formula for $C_{m}(n,k)$ is given as follows  (see, e.g., \cite{Reuveni14}):
\begin{equation}\label{DCT00}
C_{m}(n,k)=\begin{cases}\binom{n+k}{k}, &\text{ if } 0\le k\le m-1\\ \binom{n+k}{k}-\binom{n+k}{k-m}, &\text{ if } m\le k\le n+m-1
\end{cases}
\end{equation}
To make the concept more versatile and applicable, one prefers to extend the definition of $C_{m}(n,k)$ to cover all $(n,k)\in\mathbb{N}^2$ by adopting the convention that $C_{m}(n,k):=0$ for any such a $(n,k)\in \mathbb{N}^2$ that $k>n+m-1$.

The Catalan's trapezoid of the order $1$ is, in fact, identical to the Catalan's triangle $\{C(n,k)\}_{n\in \mathbb{N},\, 0\le k\le n}$, i.e.
\begin{equation}\label{DCT01a} C_1(n,k)=C(n,k):=\begin{cases}1, &\text{ if }k=n=0\\ \frac{n+1-k}{n+1}\binom{n+k}{k}, &\text{ if }n\in \mathbb{N}^*,\, 0\le k\le n\end{cases}
\end{equation}
More particularly, for any $n\in \mathbb{N}$, $C(n,n)$ equals to the $n$--th Catalan number, defined as $C_n:=\frac{1}{n+1} \binom{2n}{n}$.

\begin{remark}\label{shapiro} In \cite{Shapiro76}, the author introduced an alternative {\it Catalan's triangle} denoted as $\{B(n,k):\, n\in \mathbb{N},\ 0\le k\le n\}$, defined as with $B(n,k):=\frac{k}{n}\binom{2n}{n-k}$ for any $n\in \mathbb{N}$ and $0\le k\le n$. This new Catalan's triangle is closely related to the {\bf Catalan's convolution}, as discussed in \cite{LarFre2003}, \cite{Regev2012}, \cite{Shapiro76}, \cite{Tedford2011}.
\end{remark}

It was proved in \cite{Bailey96} that the Catalan's triangle $\{C(n,k)\}_{n\in\mathbb{N}^*,\,0\le k\le n}$
serves as the unique solution to the system of equations:
\begin{equation}\label{DCT01}
\begin{cases}&C(n,0)=1 \text{ for any }n\in\mathbb{N}\\
&C(n,1)=n \text{ for any }n\in\mathbb{N}^*\\
&C(n+1,n+1)=C(n+1,n) \text{ for any }n\in\mathbb{N}^*\\
&C(n+1,k)-C(n+1,k-1)=C(n,k) \text{ for any }n\ge2\text{ and }2\le k\le n\\
\end{cases}
\end{equation}

Notice that, in virtue of the first two equalities in \eqref{DCT01}, we can easily reformulate the system \eqref{DCT01} as follows:

1) The third one in \eqref{DCT01} can be replaced by the following new version:
\begin{align}\label{DCT01b2}
C(n+1,n+1)=C(n+1,n)\qquad  \text{for any }n\in\mathbb{N}
\end{align}
(i.e., the equality \eqref{DCT01b2} holds not only for $n\in\mathbb{N}^*$ but also for $n=0$) because
\[C(n+1,n+1)\Big\vert_{n=0}=C(1,1)=1=C(1,0)=C(n+1,n) \Big\vert_{n=0}
\]

2) The final equation in \eqref{DCT01} can be replaced with the following:
\begin{align}\label{DCT01a0}C(n+1,k)-C(n+1,k-1)= C(n,k),\ \text{ for all } n\in\mathbb{N}^* \text{ and }1\le k\le n
\end{align}
This replacement is justified because when the equation is evaluated for k=1, we get:
\begin{align*}
&\big[C(n+1,k)-C(n+1,k-1)\big]_{k=1}=C(n+1,1)-C(n+1,0)\\
=&n+1-1=n=C(n,k)\Big\vert_{k=1}, \ \text{ for all }  n\in\mathbb{N}^*
\end{align*}

3) \eqref{DCT01a0} is equivalent to the following:
\begin{align}\label{DCT01a1} C(n+1,m)=\sum_{h=0}^{m}C(n,h),\ \text{ for all }  n\in\mathbb{N}^* \text{ and }0\le m\le n
\end{align}
This equivalence can be established through the following argument:

$\bullet$ If \eqref{DCT01a1} holds, we obtain \eqref{DCT01a0} as follows:
\begin{align*}
&C(n+1,k)-C(n+1,k-1)=C(n+1,m)\Big\vert_{m=k} -C(n+1,m)\Big\vert_{m=k-1}\\
\overset{\eqref{DCT01a1}}= &\sum_{h=0}^{k}C(n,h) -\sum_{h=0}^{k-1}C(n,h)=C(n,k),\ \text{ for all }  n\in{\mathbb N}^* \text{ and } 1\le k\le n
\end{align*}

$\bullet$ On the other hand, \eqref{DCT01a1} is trivial for $m=0$; To see it for any $m\ge1$, we can repeatedly apply \eqref{DCT01a0} and obtain the following result:
\begin{align*}
&C(n+1,m)= C(n,m)+C(n+1,m-1)=\ldots\notag\\
=&C(n,m)+C(n,m-1) +\ldots+C(n,2)+ C(n+1,1) =\sum_{h=0}^{m}C(n,h)
\end{align*}
where, the last equality holds due to the fact that $C(n+1,1)=n+1=C(n,1)+C(n,0)$ for any $n\in\mathbb{N}^*$.

Summing up, the system \eqref{DCT01} can be reformulated as follows:
\begin{equation}\label{DCT01x}
\begin{cases}&C(n,0)=1 \text{ for any }n\in\mathbb{N}\\
&C(n,1)=n \text{ for any }n\in\mathbb{N}^*\\
&C(n+1,n+1)=C(n+1,n) \text{ for any }n\in\mathbb{N}\\
&C(n+1,k)=\sum_{h=0}^kC(n,h) \text{ for any } n\in\mathbb{N}^*\text{ and }0\le k\le n
\end{cases}
\end{equation}
Moreover, the second equation in the system \eqref{DCT01x} can be considered redundant because:

$\bullet$ the first and third equations in the system \eqref{DCT01x} confirm that $C(1,1)=C(1,0)=1$;

$\bullet$ the first and last equations in the system \eqref{DCT01x} ensure that, for any $n\in\mathbb{N}^*$,
\begin{align*}
C(n,1)&=\sum_{h=0}^1C(n-1,h)=C(n-1,0)+C(n-1,1)
=1+C(n-1,1)\\
&=2+C(n-2,1)=\ldots=n-1+C(1,1)=n
\end{align*}
That is, the system \eqref{DCT01x} is equivalent to the following system:
\begin{equation}\label{DCT01y}
\begin{cases}&C(n,0)=1 \text{ for any }n\in\mathbb{N}\\
&C(n+1,n+1)=C(n+1,n) \text{ for any }n\in\mathbb{N}\\
&C(n+1,k)=\sum_{h=0}^kC(n,h) \text{ for any }n\in\mathbb{N}^*\text{ and }0\le k\le n
\end{cases}
\end{equation}

Suggested by the well--known that the $n-$th Catalan number $C_n:=\frac{1}{n+1}\binom{2n}{n}$ represents the count of {\it non--crossing pair partitions of the set $\{1,2,\ldots,2n\}$} (for more detail, see \cite{Bo-Spe91} and related references), it is natural to expect a connection between the Catalan's triangle $\{C(n,k)\}_{0\le k\le n}$ and non--crossing pair partitions. Indeed, we will set such a connection in Theorem \ref{DCT05}. Moreover, in Section \ref{DCTsec1}, we will also

$\bullet$ reformulate the system \eqref{DCT01y};

$\bullet$ introduce the {\bf Catalan's triangle system} \eqref{DCT25a}, which is essentially a reformulation of the last equation in the system \eqref{DCT01y};

$\bullet$ set a meaningful connection between certain properties of non--crossing pair partitions and the solution of the Catalan's triangle system \eqref{DCT25a} with a specific boundary condition given in \eqref{DCT-CTSbou}.

Section \ref{DCTsec0a} is devoted to solving the Catalan's triangle system \eqref{DCT25a}, i.e., finding the explicit formulation of $x_{n,m}$'s (which verifies the Catalan's triangle system) when $\{x_{n,1}\} _{n\in\mathbb{N}^*}$ is given. In other words, it aims to express $x_{n,m}$'s in terms of $x_{n,1}$'s.  The main result, as presented in Theorem \ref{DCT25}, confirms that for any $n\ge2$ and $2\le m\le n$, $x_{n,m}$ is a homogenous linear combination of $\{x_{k,1}: k=1,\ldots, n-m+1\}$ with the coefficients $\{C_{m-1}(k-1,k+m-3): k=1,\ldots, n-m+1\}$.

In Section \ref{DCTsec(q,2)}, we utilize the $(q,2)-$Fock space (defined in Definition \ref{(q,2)-Fock}), to provide a non--trivial example of the sequence $\{x_{n,m}\} _{n\in\mathbb{N}^*,m\le n}$ that satisfies the Catalan's triangle system. Importantly, this example features a boundary condition distinct from \eqref{DCT-CTSbou}. Moreover, this particular sequence is closed connected to a subset of the set of all pair partitions, which actually lies between the complete set of pair partitions
and the set of non--crossing pair partitions.

\section{The Catalan numbers, Catalan's triangle and non--crossing pair partition}
\label{DCTsec1}

In this section, we aim to achieve the following objectives:

$\bullet$ introduce the {\it Catalan's triangle system} by reformulating the system \eqref{DCT01y};

$\bullet$ establish the relevance between the solution of the Catalan's triangle system with the specified boundary condition \eqref{DCT-CTSbou} and various properties of non--crossing pair partitions.

\subsection{Reformulation of the system \eqref{DCT01y} and the Catalan's triangle system} \label{DCTsec1-1}

We introduce the following simple modification of the Catalan's triangle:
\begin{equation}\label{DCT05a0}
S_{n,k}:=C(n-1,n-k)=\frac{k}{n}\binom{2n-k-1}{n-1}\,,\ \ \text{ for all }  n\in\mathbb{N}^*\text{ and }1\le k\le n
\end{equation}
Then,

$\bullet$ the fact $C(n,0)=1$ for any $n\in\mathbb{N}$ becomes to $S_{n,n}=1$ for any $n\in\mathbb{N}^*$;

$\bullet$ the fact $C(n,n)=C(n,n-1)$ for any $n\in\mathbb{N}^*$ becomes to $S_{n+1,1}=S_{n+1,2}$ for any $n\in\mathbb{N}^*$;

$\bullet$ the last one in the system \eqref{DCT01y} becomes to $S_{n+1,k+1}=\sum_{j=k}^{n}S_{n,j}$ for any $n\in\mathbb{N}^*$ and $k\in\{1,\ldots,n\}$.\\
Therefore, $\{S_{n,k}\}_{n\in\mathbb{N}^*,\,1\le k\le n}$ satisfies the following {\it Catalan's triangle system}:
\begin{align} \label{DCT25a}
x_{n+1,k+1}=\sum_{j=k}^{n}x_{n,j},\quad \text{for any } n\in\mathbb{N}^*\text{ and }k\in\{1,\ldots,n\}
\end{align}
with the {\it boundary conditions}:
\begin{equation}\label{DCT-CTSbou}
x_{n,n}=1,\qquad  x_{n+1,2}=x_{n+1,1}, \quad \text{ for any } n\in\mathbb{N}^*
\end{equation}
Evidently, the Catalan's triangle system is a {\it system of homogenous linear difference equations}. The solution of the system \eqref{DCT01y} is the solution of the Catalan's triangle system \eqref{DCT25a} with the specific boundary condition:
\begin{equation}\label{DCT-CTSbou1}
x_{n,n}=1,\qquad  x_{n+1,2}=x_{n+1,1}=C_n, \quad \text{ for any } n\in\mathbb{N}^*
\end{equation}

\subsection{Pair partition, non--crossing pair partition and some their properties}
\label{DCTsec1-2}

Let's start by revisiting some fundamental concepts and properties related to pair partition and non--crossing pair partition.\smallskip

For any $n\in\mathbb{N}^*$, a collection of $n$ pairs $\{(l_h,r_h)\}_{h=1}^n$ forms a {\it pair partition} of the set $\{1,2,\ldots,2n\}$ if

\begin{align}\label{DCT01b}
\big\{l_h:h\in\{1,\ldots,n\}\big\}\cup\big\{r_h: h\in\{1,\ldots,n\} \big\} =\{1,2,\ldots,2n\}
\end{align}
\begin{equation}\label{DCT01c}
l_h<r_h \text{ for any }h\in\{1,\ldots,n\}
\end{equation}
\begin{equation}\label{DCT01d}l_1<l_2<\ldots <l_n\end{equation}

The set of all pair partitions of $\{1,2,\ldots,2n\}$ is usually denoted as $PP(2n)$. For a pair partition $\{(l_h,r_h)\}_{h=1}^n\in PP(2n)$, the terms $l_h$'s (respectively, $r_h$'s) are referred to as its left (respectively, right) indices. Importantly, it is a well--known fact that
\begin{equation}\label{DCT01d1}
\big\vert PP(2n)\big\vert=(2n-1)\cdot(2n-3)\cdot\ldots \cdot 3\cdot 1=:(2n-1)!!\end{equation}
which is, in fact, the $2n$--th moment of the standard Gaussian distribution. Here and throughout, we adopt the notation of {\bf semi--factorial} $(2n-1)!!$'s as defined in \eqref{DCT01d1}.

An important subset of $PP(2n)$ is the following:
$$NCPP(2n):=\text{the set of all the non--crossing pair partitions of } \{1,2,\ldots,2n\}$$
Hereinafter, $\{(l_h,r_h)\}_{h=1}^n\in PP(2n)$ is {\it non--crossing} if the following equivalence holds for any $1\le h<k\le n$:
\begin{equation}\label{DCT01e}l_k<r_h \text{ if and only if } r_k<r_h \end{equation}
As an analogue of \eqref{DCT01d1}, one knows that
\begin{equation}\label{DCT01d2}
\big\vert NCPP(2n)\big\vert=C_n:=n-\text{th Catalan number }=\frac{1}{n+1}\binom{2n}{n}\end{equation}
This quantity is, in fact, the $2n$--th moment of the semi--circle distribution with the variance 1.

\begin{remark}\label{DCTrem1-2} For any $n\ge2$ and a pair partition $\{(l_h,r_h)\}_{h=1}^n$, it is easy to know that

1) the condition \eqref{DCT01d} (i.e., the left indices are increasing) can be replaced by its counterpart:
\begin{equation}\label{DCT01d0}r_1<r_2<\ldots <r_n
\end{equation}

2) for any $1\le h<k\le n$, \eqref{DCT01c} and \eqref{DCT01d} ensure that one can replace \eqref{DCT01e} with
\begin{equation}\label{DCT01e0}l_h<l_k<r_h \text{ if and only if } l_h<l_k<r_k<r_h \end{equation}
\end{remark} \smallskip

The above statement 2) implies that the set $NCPP(2n)$ comprises all such pair partitions $\{(l_h,r_h)\}_{h=1}^n\in PP(2n)$ that for any $1\le h< k\le n$, the {\bf interval} $(l_k,r_k)$ is either entirely contained within or to the right of the {\bf interval} $(l_h,r_h)$.

\begin{proposition}\label{DCT02} For any $n\in\mathbb{N}^*$ and $\{(l_h,r_h)\}_{h=1}^n\in PP(2n)$,
\begin{equation}\label{DCT02a}
\big\vert\{r_h:h=1,2,\ldots,n\}\cap \{l_s,l_s+1,\ldots,2n\}\big \vert \ge n-s+1,\ \text{ for any } s\in\{1,\ldots,n\}\end{equation}
In particular, $l_1=1$ and $l_n<2n$.
\end{proposition}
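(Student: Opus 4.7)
\medskip

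\noindent\textbf{Proof plan.} The core of the statement is the cardinality bound \eqref{DCT02a}; the two ``in particular'' assertions will fall out as immediate corollaries (or can even be read off directly from the defining conditions \eqref{DCT01b}--\eqref{DCT01d}). The plan rests on one elementary observation: the monotonicity of the left indices together with the inequality $l_h<r_h$ forces every right index with index $h\ge s$ to lie strictly to the right of $l_s$.

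\medskip

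\noindent First I would fix $s\in\{1,\ldots,n\}$ and consider the indices $h=s,s+1,\ldots,n$, which form a set of cardinality $n-s+1$. For each such $h$, condition \eqref{DCT01d} gives $l_h\ge l_s$, and condition \eqref{DCT01c} gives $r_h>l_h\ge l_s$; combined with the trivial upper bound $r_h\le 2n$ (coming from \eqref{DCT01b}), this yields
\begin{equation*}
r_h\in\{l_s+1,l_s+2,\ldots,2n\}\subseteq\{l_s,l_s+1,\ldots,2n\},\qquad h=s,s+1,\ldots,n.
\end{equation*}
Since the $r_h$'s are pairwise distinct (as they are members of a partition of $\{1,\ldots,2n\}$), the set $\{r_s,r_{s+1},\ldots,r_n\}$ contributes $n-s+1$ distinct elements to $\{r_h:h=1,\ldots,n\}\cap\{l_s,\ldots,2n\}$, proving \eqref{DCT02a}.

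\medskip

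\noindent For the equality $l_1=1$, I would argue that every right index satisfies $r_h>l_h\ge 1$, hence $r_h\ge 2$, so $1\notin\{r_h:h=1,\ldots,n\}$; by \eqref{DCT01b}, $1$ must then belong to $\{l_h:h=1,\ldots,n\}$, and \eqref{DCT01d} forces $l_1=1$. For $l_n<2n$, the pair $(l_n,r_n)$ satisfies $l_n<r_n\le 2n$, whence $l_n\le 2n-1<2n$.

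\medskip

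\noindent I do not foresee a real obstacle: the argument is a one-line pigeonhole based on the monotonicity \eqref{DCT01d} and the strict inequality \eqref{DCT01c}. The only point demanding a little care is making explicit that the $n-s+1$ right indices $r_s,\ldots,r_n$ are genuinely distinct (a consequence of the partition structure \eqref{DCT01b}), so that they contribute $n-s+1$ distinct elements to the intersection rather than being overcounted.
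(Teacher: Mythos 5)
Your proof is correct and is essentially the paper's own argument: the paper first records the general inequality \eqref{DCT02c} (for every $p$, the right indices in $\{p,\ldots,2n\}$ are at least as numerous as the left indices there, via the same injection $l_h\mapsto r_h$ forced by $l_h<r_h$) and then specializes to $p=l_s$, which is exactly your direct count of the $n-s+1$ distinct right indices $r_s,\ldots,r_n$ lying in $\{l_s,\ldots,2n\}$. Your derivations of $l_1=1$ and $l_n<2n$ are likewise equivalent to the paper's (and your $l_n<r_n\le 2n$ is, if anything, slightly more direct).
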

\begin{proof}
It follows from \eqref{DCT01c} that for any $p\in\{1,2,\ldots,2n\}$, there are more right indices than left indices among the elements of the set $\{p,p+1,\ldots,2n\}$. In other words:
\begin{align}\label{DCT02c}&\big\vert \big\{r_h:h\in\{1,\ldots,n\} \big\}\cap \big\{p,p+1,\ldots,2n\big\}\big\vert \notag\\
\ge &\big\vert \big\{l_h:h\in\{1,\ldots,n\} \big\}\cap \big\{p,p+1,\ldots,2n\big\}\big\vert,\quad\text{for any } p\in\{1,\ldots,2n\}
\end{align}
So, \eqref{DCT02a} is derived by taking $p:=l_s$ because in this case:

$\bullet$ the expression in the left--hand side of \eqref{DCT02c} becomes identical to the left--hand side of \eqref{DCT02a};

$\bullet$ \eqref{DCT01d} (i.e., the increasing property of $l_h$'s) guarantees that the set $\big\{l_h:h\in\{1,\ldots,n\} \big\}\cap \big\{l_s,l_s+1,\ldots,2n\big\}$ is equal to $\big\{l_h:h\in\{s,s+1,\ldots,n\}\big\}$, and its cardinality is $n-s+1$.

By setting $s=n$, we obtain the following inequality:
\begin{equation}\label{DCT02b}
\big\vert \{l_n,l_n+1,\ldots,2n\}\big \vert\ge  \big\vert\{r_h:h=1,2,\ldots,n\}\cap \{l_n,l_n+1,\ldots,2n\}\big \vert \overset{\eqref{DCT02a}}\ge 1
\end{equation}
and this implies that $l_n<2n$.

Finally, \eqref{DCT01c} and \eqref{DCT01d} guarantee that $l_1$ is the minimum of the set $\big\{l_h,r_h:h \in \{1,\ldots,n\}\big\}$, which, according to \eqref{DCT01b}, is equal to $\{1,\ldots,2n\}$. Therefore
$l_1=1$.  \end{proof}

\begin{corollary}\label{DCT-rem2-7} For any $n\in\mathbb{N}^*$ and $\{(l_h,r_h)\}_{h=1}^n\in PP(2n)$, \eqref{DCT02a} and \eqref{DCT02c} are equivalent.
\end{corollary}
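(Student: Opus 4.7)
\smallskip
\noindent\textbf{Proof proposal.} The forward direction, that \eqref{DCT02c} implies \eqref{DCT02a}, is already contained in the proof of Proposition \ref{DCT02}: one simply substitutes $p:=l_s$ and uses the increasing property of the left indices to identify the right-hand side of \eqref{DCT02c} with $n-s+1$. So the task reduces to establishing the converse direction \eqref{DCT02a} $\Rightarrow$ \eqref{DCT02c}.

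The plan is to fix an arbitrary $p\in\{1,\ldots,2n\}$ and reduce the inequality at this $p$ to the inequality \eqref{DCT02a} at an appropriately chosen index $s$. The natural choice is to set $s$ to be the smallest $h\in\{1,\ldots,n\}$ for which $l_h\ge p$ (so $p$ is ``rounded up'' to the nearest left index). If no such $h$ exists, i.e. $p>l_n$, then by the strictly increasing property \eqref{DCT01d} the set $\{l_h:h\in\{1,\ldots,n\}\}\cap\{p,p+1,\ldots,2n\}$ is empty, so the right-hand side of \eqref{DCT02c} is $0$ and there is nothing to prove. Otherwise, such an $s$ exists and lies in $\{1,\ldots,n\}$, with $l_s\ge p$.

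Now \eqref{DCT01d} guarantees that $\{l_h:h\in\{1,\ldots,n\}\}\cap\{p,p+1,\ldots,2n\}=\{l_s,l_{s+1},\ldots,l_n\}$, whose cardinality is exactly $n-s+1$. On the other hand, the inclusion $\{l_s,l_s+1,\ldots,2n\}\subseteq\{p,p+1,\ldots,2n\}$ (which holds because $l_s\ge p$) yields the monotonicity
\begin{equation*}
\big\vert\{r_h:h\in\{1,\ldots,n\}\}\cap\{p,p+1,\ldots,2n\}\big\vert
\;\ge\;
\big\vert\{r_h:h\in\{1,\ldots,n\}\}\cap\{l_s,l_s+1,\ldots,2n\}\big\vert.
\end{equation*}
Applying \eqref{DCT02a} to the right-hand side bounds it below by $n-s+1$, which is precisely the cardinality computed above for the left indices. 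Chaining the two bounds produces \eqref{DCT02c}.

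The only delicate point is making sure the reduction index $s$ is well defined: handling the boundary case $p>l_n$ separately takes care of this, after which the argument is just the monotonicity of cardinality together with the increasing property \eqref{DCT01d}. No further obstacle is expected.
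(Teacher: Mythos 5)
Your proposal is correct and follows essentially the same route as the paper: reduce the inequality at $p$ to the case $p=l_s$ with $s:=\min\{t:l_t\ge p\}$, handle the empty case separately, compute the left-index cardinality as $n-s+1$ via \eqref{DCT01d}, and use the inclusion $\{l_s,\ldots,2n\}\subseteq\{p,\ldots,2n\}$ to transfer the bound from \eqref{DCT02a}. The only cosmetic difference is that the paper re-derives the lower bound $n-s+1$ from the containment $\{r_h:h\ge s\}\subseteq\{r_h:h\in\{1,\ldots,n\}\}\cap\{l_s,\ldots,2n\}$ rather than citing \eqref{DCT02a} verbatim, which changes nothing of substance.
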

\begin{proof}
Clearly, we only need to show the implication from \eqref{DCT02a} to \eqref{DCT02c}.

For any $p\in\{1,\ldots,2n\}$, the inequality in \eqref{DCT02c} becomes trivial when the set $\{l_h: h\in\{1,\ldots,n\} \}\cap \{p,p+1,\ldots,2n\}$ is empty. Let's prove the inequality in the case where $\{l_h: h\in\{1,\ldots,n\}\}\cap \{p,p+1,\ldots,2n\}$ is non--empty. In this case, the set $\{t:l_t\ge p\}$ is non--empty and therefore $s:=\min\{t:l_t\ge p\}$ is well--defined. Moreover,

$\bullet$ the definition $s$ as the minimum value in the set $\{t : l_t \geq p\}$ ensures that the intersection of the sets $\big\{l_h:h\in\{1,\ldots,n\} \big\}$ and $\{p,p+1,\ldots,2n\}$ equals to $\big\{l_h: h\in\{1,\ldots, n\}\big\}\cap \big\{l_s,l_s+1, \ldots,2n\big\}$. Moreover, this set, thanks to the increasing property of $l_h$, is noting else than $\big\{l_h:h\in\{s,s+1, \ldots, n\}\big\}$. As a result, the expression on the right hand side of the inequality in \eqref{DCT02c} is equal to $n-s+1$;

$\bullet$ holds the inclusion: $\{r_h:h=1,2,\ldots,n\}\cap \{p,p+1,\ldots,2n\}\supset \{r_h:h=1,2,\ldots,n\}\cap \{l_s,l_s+1,\ldots,2n\}$ because the definition of $s$ gives $l_s\ge p$;

$\bullet$ for any $t\ge s$, we observe $r_t> l_t\ge l_s$, which implies that the set $\{r_h:h=s,\ldots, n\}$ is a subset of $\{r_h:h=1,2, \ldots,n\}\cap \{l_s,l_s+1,\ldots,2n\}$.\\
Summing up, we have
\begin{align*}
&\big\vert \{r_h:h=1,2,\ldots,n\}\cap \{p,p+1,\ldots,2n\}\big\vert\\
\ge &\big\vert \{r_h:h=1,2,\ldots,n\}\cap \{l_s,l_s+1,\ldots,2n\}\big\vert\\
\ge &\big\vert\{r_h:h=s,\ldots,n\}\big\vert=n-s+1\\
=&\big\vert \{l_h:h=1,2,\ldots,n\}\cap \{p,p+1,\ldots,2n\}\big\vert
\end{align*}
\end{proof}

\begin{proposition}\label{DCT03} For any $n\in\mathbb{N}^*$ and $L_n=\{l_1,l_2,\ldots,l_n\} \subset \{1,2,\ldots,2n\}$ with the order $l_1<l_2<\ldots<l_n$, if the following analogue of \eqref{DCT02c} holds:
\begin{align}\label{DCT02d0}
&\big\vert L_n^c\cap \{p,p+1,\ldots,2n\}\big \vert \ge \big\vert L_n\cap \{p,p+1,\ldots,2n\}\big \vert,\ \text{for any } p\in\{1,2,\ldots,2n\}
\end{align}
with $L_n^c:=\{1,\ldots,2n\}\setminus L_n$, then the following affirmations are true:

1) there exists unique element of $NCPP(2n)$ such that $L_n$ is the set of its left indices;

2) there exist $\prod_{h=1}^n(2h-l_h)$ elements of $PP(2n)$ such that $L_n$ is the set of their left indices.
\end{proposition}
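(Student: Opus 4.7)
My plan is to handle parts 1) and 2) separately after reformulating \eqref{DCT02d0}. Since $|L_n|=|L_n^c|=n$, the hypothesis is equivalent to the Dyck-style prefix condition
\[
\big|L_n\cap\{1,\ldots,p\}\big|\ge\big|L_n^c\cap\{1,\ldots,p\}\big|\qquad\text{for every }p\in\{0,1,\ldots,2n\},
\]
i.e.\ the lattice path that takes an up-step at each position in $L_n$ and a down-step at each position in $L_n^c$ never drops below zero.

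For part 1), I would construct the required non--crossing partition by the classical stack algorithm: scan $1,2,\ldots,2n$ in order, push each $i\in L_n$ onto a stack, and whenever $i\in L_n^c$ pop the current top $l$ to record the pair $(l,i)$. The prefix condition keeps the stack nonempty at every pop, so the algorithm succeeds; its LIFO behaviour forces any two output pairs to be either nested or disjoint, which is the non--crossing property \eqref{DCT01e0}. The output lies in $PP(2n)$ because $l<i$ by construction, and reindexing the pairs in increasing order of the left coordinate recovers $L_n$ as the set of left indices. For uniqueness, I would show that in every element of $NCPP(2n)$ with left indices $L_n$ each right index must pair with the \emph{most recent} unmatched left index: if instead $i\in L_n^c$ paired with an earlier $l'$ while a later unmatched $l$ lay strictly between them, then the partner $r$ of $l$ would satisfy $r>i$, producing the forbidden crossing $l'<l<i<r$. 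Thus the NCPP is forced to agree with the stack output.

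For part 2), I would count pair partitions by assigning a right partner to $l_h$ in decreasing order of $h$, so that at step $h$ one has already fixed the partners of $l_{h+1},\ldots,l_n$. The partner of $l_h$ must lie in $L_n^c$, exceed $l_h$, and be still unused. The number of elements of $L_n^c$ above $l_h$ equals $(2n-l_h)-(n-h)=n+h-l_h$, since $\{l_{h+1},\ldots,l_n\}$ accounts for the $n-h$ elements of $L_n$ above $l_h$. Moreover, each of the $n-h$ previously assigned right partners was chosen greater than some $l_k$ with $k>h$, hence greater than $l_h$; so the number of admissible choices at step $h$ is $(n+h-l_h)-(n-h)=2h-l_h$. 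Applying \eqref{DCT02d0} at $p=l_h$ yields $n+h-l_h\ge n-h+1$, i.e.\ $2h-l_h\ge 1$, so every step admits at least one choice. Since this count depends only on $h$ and $l_h$ and not on the specific prior assignments, the total multiplies to $\prod_{h=1}^{n}(2h-l_h)$.

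The main subtle point is the independence observation underlying the multiplicativity in part 2): the number of choices at step $h$ is always exactly $2h-l_h$, regardless of the prior assignments, because every previously used right index automatically exceeds $l_h$. Once this is noted, the proof reduces to careful accounting against the Dyck reformulation of \eqref{DCT02d0}.
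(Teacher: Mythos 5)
Your proof is correct, but note that the paper does not actually prove Proposition \ref{DCT03}: its ``proof'' consists of citing \cite{Ac-Lu96} for affirmation 1) and \cite{Ac-Lu2022a} for affirmation 2). Your argument therefore supplies what the paper delegates to the literature, and it does so by entirely standard and sound means: the translation of \eqref{DCT02d0} into the Dyck prefix condition is exactly right (using $\vert L_n\vert=\vert L_n^c\vert=n$ to flip the suffix inequality into a prefix one), the stack/matching-parentheses construction together with the ``most recent unmatched left index'' argument correctly yields existence and uniqueness of the non--crossing partition, and in part 2) the key accounting — that at step $h$ (processing $l_n,l_{n-1},\ldots$ in decreasing order) exactly $2h-l_h$ right partners remain admissible \emph{independently} of the earlier choices, because every previously used right index exceeds some $l_k$ with $k>h$ and hence exceeds $l_h$ — is the correct justification for the product formula, with \eqref{DCT02d0} at $p=l_h$ guaranteeing $2h-l_h\ge1$ so the construction never stalls. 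The only thing to be aware of is that these counts ($1$ for $NCPP$, $\prod_{h=1}^n(2h-l_h)$ for $PP$) are established in the cited references in the context of Fock-space moment computations; your self-contained combinatorial derivation is shorter and more elementary than what those references need, at the cost of not connecting to the operator-theoretic framework the paper later relies on in Section \ref{DCTsec(q,2)}.
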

\begin{proof} In fact, the affirmation 1) was proved in \cite{Ac-Lu96}, and similarly the affirmation 2) was proved in \cite{Ac-Lu2022a}. \end{proof}

\begin{remark}\label{DCT-rem2-5}
Pair partition, and specifically non--crossing pair partition, can be readily extended to any set $V_n:=\{v_1,v_2,\ldots,v_{2n}\}$ with the order $v_1<\ldots<v_{2n}$ through the medium of the $v$'s indices as follows: {\bf $\{(v_{l_h},v_{r_h})\}_{h=1}^n$ is a pair partition (respectively, non--crossing pair partition) of $V_n$ if and only if $\{(l_h,r_h)\} _{h=1}^n\in PP(2n)$ (respectively, $NCPP(2n)$)}.
\end{remark}

By using this trivial generalization, one can formulate the following result.

\begin{corollary}\label{DCT-rem2-6} For any $n\in\mathbb{N}^*$, $m\in\{1,\ldots,n\}$ and $\{(l_h,r_h)\}_{h=1}^n\in NCPP(2n)$, for any $1\le h_1<\ldots<h_m\le n$, $\{(l_{h_j},r_{h_j})\}_ {j=1}^m$ is the unique element of $NCPP(\{l_{h_j},$ $r_{h_j}: j\in\{1,2, \ldots,m\} \})$ with the left indices set $\{l_{h_j}:j\in\{1,2,\ldots, m\} \}$. \end{corollary}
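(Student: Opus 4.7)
The plan is to reduce the statement to a direct application of Proposition \ref{DCT03} part 1), after verifying the two elementary facts that the restricted collection is a pair partition of the sub--set and that non--crossing is inherited. Write $V_m := \{l_{h_j},r_{h_j}: j=1,\ldots,m\}$ and enumerate its elements in increasing order as $v_1<v_2<\ldots<v_{2m}$. Via this enumeration, assign to each $l_{h_j}$ (resp. $r_{h_j}$) the unique index $L_j$ (resp. $R_j$) in $\{1,\ldots,2m\}$ such that $l_{h_j}=v_{L_j}$ (resp. $r_{h_j}=v_{R_j}$). Through this reindexing, proving the statement amounts, by Remark \ref{DCT-rem2-5}, to showing that $\{(L_j,R_j)\}_{j=1}^m$ is the unique element of $NCPP(2m)$ whose set of left indices equals $\{L_j: j=1,\ldots,m\}$.

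First, I would check that $\{(L_j,R_j)\}_{j=1}^m\in PP(2m)$. The order--preserving nature of the reindexing delivers $L_1<L_2<\ldots<L_m$ from the increasing sequence $l_{h_1}<\ldots<l_{h_m}$, and $L_j<R_j$ from $l_{h_j}<r_{h_j}$. The covering identity $\{L_j\}\cup\{R_j\}=\{1,\ldots,2m\}$ holds by construction of $V_m$, and the two halves are disjoint because $\{l_h\}$ and $\{r_h\}$ are disjoint in the original partition. Next, I would inherit the non--crossing condition: for any $1\le j<k\le m$ we have $h_j<h_k$, so \eqref{DCT01e} applied in the original partition gives $l_{h_k}<r_{h_j}\Leftrightarrow r_{h_k}<r_{h_j}$, and since the reindexing $x\mapsto($its position in $V_m)$ is strictly monotone on $V_m$, this translates verbatim into $L_k<R_j\Leftrightarrow R_k<R_j$, which is precisely \eqref{DCT01e} for $\{(L_j,R_j)\}_{j=1}^m$.

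Finally, uniqueness. Since $\{(L_j,R_j)\}_{j=1}^m\in PP(2m)$, Proposition \ref{DCT02} (combined with Corollary \ref{DCT-rem2-7}) guarantees that its set of left indices $\{L_j: j=1,\ldots,m\}$ satisfies the hypothesis \eqref{DCT02d0} required to invoke Proposition \ref{DCT03} part 1). That proposition then asserts that there is a unique element of $NCPP(2m)$ with this set as its left indices; the non--crossing partition just constructed is such an element, so it is the unique one, and reverting the reindexing yields the claim.

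The reasoning is essentially bookkeeping rather than a substantial obstacle; the only mildly delicate point is to make explicit that the reindexing $v_i\mapsto i$ preserves all strict order relations, so that both the pair--partition axioms \eqref{DCT01c}, \eqref{DCT01d} and the non--crossing condition \eqref{DCT01e} are transported cleanly between $V_m$ and $\{1,\ldots,2m\}$. Once this is spelled out, uniqueness is an immediate consequence of Proposition \ref{DCT03} and nothing else needs to be computed.
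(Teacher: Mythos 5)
Your proposal is correct and follows essentially the same route as the paper: restrict the partition, observe it is a pair partition of the sub--set with the stated left indices, note that non--crossing is inherited, and let Proposition \ref{DCT03} part 1) supply uniqueness. The paper's own proof is just a terser version of this; your extra care with the order--preserving reindexing and with verifying hypothesis \eqref{DCT02d0} via Proposition \ref{DCT02} and Corollary \ref{DCT-rem2-7} fills in details the paper leaves implicit.
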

\begin{proof} $\{(l_{h_j},r_{h_j})\}_{j=1}^m$ is clearly a pair partition of the set $\{l_{h_j},r_{h_j}:j=1,2, \ldots,m\}$, where the left indices set is $\{l_{h_j}:j=1,2,\ldots,m\}$. The non--crossing property of this pair partition is guaranteed by the non--crossing property of $\{(l_h,r_h)\}_{h=1}^n$. \end{proof}

\subsection{Non--crossing pair partition and the Catalan's triangle system}  \label{DCTsec1-3}

In accordance with the definition of a pair partition, it is evident that, for any $n\in\mathbb{N}^*$ and $\{(l_h,r_h)\}_{h=1}^n\in PP(2n)$,

$\bullet$ $2n-l_n\ge1$ because $l_n<2n$;

$\bullet$ $2n-l_n\le n$ because $1=l_1<l_2<\ldots<l_n$ (so $l_n\ge n$).\\
Thanks to these observations, for any $n\in\mathbb{N} ^*$ and $k\in\{1,\ldots,n\}$, we reasonably introduce the following sets:
\begin{align}
PP_k(2n)&:=\big\{ \{(l_h,r_h)\}_{h=1}^n\in PP(2n):2n-l_n=k\big\} \notag\\
NCPP_k(2n)&:=\big\{ \{(l_h,r_h)\}_{h=1}^n\in NCPP(2n):2n-l_n=k\big\}
\label{DCT03a}
\end{align}
Clearly

$\bullet$ $PP(2n)=\cup_{k=1}^nPP_k(2n)$ and $PP_{k}(2n)$'s are pairwise disjoint;

$\bullet$ similarly, $NCPP(2n)=\cup_{k=1}^nNCPP_k(2n)$ and $NCPP_{k}(2n)$'s are pairwise disjoint.

\begin{proposition}\label{DCT04} For any $n\in\mathbb{N}^*$, we have the following assertions:

1) For $k=n$,
\begin{equation}\label{DCT04a0}
NCPP_n(2n)=\{(h,2n+1-h)\}_{h=1}^n;\ \ PP_n(2n)=\big\{ \{(h,r_{n+\sigma(h)})\}_{h=1}^n: \sigma\in\mathfrak{S}_n\big\}
\end{equation}
Here, $\mathfrak{S}_n$ denotes the symmetric group of the order $n$. Consequently
\begin{equation}\label{DCT04a}
\big\vert PP_n(2n)\big\vert =n!\,;\quad \big\vert NCPP_n(2n)\big\vert=1
\end{equation}

2) For $k=1$,
\begin{align}
\big\vert PP_1(2n)\big\vert &=\big\vert PP(2n-2)\big\vert =(2n-3)!!\label{DCT04b}\\
\big\vert NCPP_1(2n)\big\vert &=\big\vert NCPP(2n-2) \big\vert =\frac{1}{n}\binom{2n-2}{n-1}\label{DCT04b0}
\end{align}

3) In the case of $n>2$ and $k\in\{2,3,\ldots,n-1\}$,
\begin{equation}\label{DCT04c}
NCPP_k(2n)\text{ and }\cup_{h=k-1}^{n-1} NCPP_h(2n-2)\text{ are bijective}
\end{equation}
In particular,
\begin{equation}\label{DCT04c1}
NCPP_2(2n)\text{ and }\cup_{h=1}^{n-1}NCPP_h(2n-2)\ (\text{i.e., }NCPP(2n-2))\text{ are bijective}
\end{equation}
and
\begin{equation}\label{DCT04c2}
\big\vert NCPP_k(2n)\big\vert =
\sum_{h=k-1}^{n-1}\big\vert NCPP_h(2n-2)\big\vert
\end{equation}
More particularly,
\begin{equation}\label{DCT04c3}
\big\vert NCPP_2(2n)\big\vert =\big\vert NCPP_1(2n)\big\vert =\big\vert NCPP(2n-2)\big\vert =C_{n-1}=\frac{1}{n} \binom{2n-2}{n-1}
\end{equation}
and
\begin{equation}\label{DCT04b1}
\big\vert NCPP_{n-1}(2n)\big\vert=n-1
\end{equation}
\end{proposition}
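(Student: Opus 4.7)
My plan is to handle the three parts in turn, with the bulk of the work going into part 3).

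For part 1), I would argue that the constraint $2n-l_n=n$, combined with Proposition \ref{DCT02} (which gives $l_1=1$) and the strict increase \eqref{DCT01d}, forces $l_h=h$ for every $h$. The right indices $\{r_h\}_{h=1}^n$ then form a permutation of $\{n+1,\ldots,2n\}$, which immediately gives $n!$ elements of $PP_n(2n)$. For $NCPP_n(2n)$, since $l_k=k\le n<n+1\le r_h$ for every $h<k$, the left side of the equivalence \eqref{DCT01e} always holds, forcing $r_1>r_2>\cdots>r_n$ and hence uniquely $r_h=2n+1-h$.

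For part 2), $2n-l_n=1$ forces $l_n=2n-1$ and hence $r_n=2n$. I would then delete this terminal pair to realise bijections $PP_1(2n)\leftrightarrow PP(2n-2)$ and $NCPP_1(2n)\leftrightarrow NCPP(2n-2)$, from which \eqref{DCT04b} and \eqref{DCT04b0} follow via \eqref{DCT01d1} and \eqref{DCT01d2}.

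Part 3) I would base on the key structural observation that every $\pi=\{(l_h,r_h)\}_{h=1}^n\in NCPP(2n)$ must satisfy $r_n=l_n+1$: if some $p$ lay strictly between $l_n$ and $r_n$, the maximality of $l_n$ among left indices would force $p=r_m$ for some $m<n$, yielding $l_m<l_n<r_m<r_n$, which contradicts \eqref{DCT01e0}. Hence for $\pi\in NCPP_k(2n)$ the last pair is exactly $(2n-k,2n-k+1)$. Removing this pair and relabeling via Remark \ref{DCT-rem2-5} produces an element $\pi'\in NCPP(2n-2)$ whose last left index $l_{n-1}$ lies in $\{n-1,\ldots,2n-k-1\}$, so that $2(n-1)-l_{n-1}$ ranges precisely over $\{k-1,\ldots,n-1\}$; this gives the bijection \eqref{DCT04c}. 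The inverse would insert the adjacent pair $(2n-k,2n-k+1)$ into any $\pi'\in NCPP_h(2n-2)$ with $h\ge k-1$ (the bound being exactly what keeps the new last left index at $2n-k$). Formula \eqref{DCT04c2} is then immediate; \eqref{DCT04c3} follows by combining the $k=2$ case with part 2); and \eqref{DCT04b1} I would prove by induction on $n$ using \eqref{DCT04c2} at $k=n-1$ together with $|NCPP_{n-1}(2n-2)|=1$ (from part 1)) and the induction hypothesis $|NCPP_{n-2}(2n-2)|=n-2$.

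The main obstacle will be the careful bookkeeping in the bijection of part 3): I need to verify that deletion-plus-relabeling and its inverse really are mutually inverse, and in particular that the bound $h\ge k-1$ is both necessary and sufficient for the inserted pair to leave $2n-k$ as the new last left index of the larger partition, since if $h=k-2$ the old last left index $2n-2-h=2n-k$ would itself get shifted to $2n-k+2$ by the insertion, pushing $\pi$ out of $NCPP_k(2n)$.
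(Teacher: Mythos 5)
Your proposal is correct and follows essentially the same route as the paper: the same direct identification for $k=n$, the same deletion of the forced terminal pair $(2n-1,2n)$ for $k=1$, and the same key observation $r_n=l_n+1$ driving the delete--and--relabel bijection in part 3) (the paper merely organizes this by partitioning $NCPP_k(2n)$ according to $m=l_n-1-l_{n-1}$, which amounts to your tracking of the reduced partition's last left index). The only real divergence is \eqref{DCT04b1}, which the paper proves by directly counting the single right index lying in $\{2,\ldots,n\}$ rather than by your induction via \eqref{DCT04c2}; both arguments are fine.
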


\begin{proof} Statement 1) is trivial because when $2n-l_n=n$ (i.e. $l_n=n$), \eqref{DCT01d} necessitates $l_h=h$ for any $h\in\{1,\ldots,n\}$.

In both \eqref{DCT04b} and \eqref{DCT04b0}, the second equality is a well--known result and we clarify the first: For the case of $k=1$ (i.e., $l_n=2n-1$), $r_n$ must be $2n$ and so the application which transforms $\{(l_h,r_h)\}_{h=1}^n\in PP_n(2n)$ to $\{(l_h,r_h)\}_{h=1}^{n-1}\in PP(2n-2)$ is clearly a bijection. Additionally, Corollary \ref{DCT-rem2-6} guarantees that if $\{(l_h,r_h)\}_{h=1}^n$ is non--crossing, then $\{(l_h,r_h)\}_{h=1}^{n-1}$ is non--crossing. This discussion justifies the first equality in both \eqref{DCT04b} and \eqref{DCT04b0}.

To prove \eqref{DCT04b1}, observe that any $\{(l_h,r_h)\}_{h=1}^{n}\in NCPP_{n-1}(2n)$ must verify $l_{n}=n+1$. Consequently, all integers from $n+2$ to  $2n$ are right indices. This implies that among $\{2,3,\ldots,n\}$, there exists exactly one right index. In other words, when $l_{n}=n+1$, any $r\in\{2,3,\ldots,n\}$ uniquely associates with an element of $NCPP_{n-1}(2n)$ whose right indices are $\{r,n+2,n+3,\ldots,2n\}$. Therefore,
$\big\vert NCPP_{n-1}(2n)\big\vert=\big\vert \{2,3,\ldots,n\}\big\vert=n-1$.

Our final objective is to prove the assertion 3), excluding the formula \eqref{DCT04b1} which is already established. It is evident that our focus should primarily be on demonstrating the validity of  \eqref{DCT04c}, as the remaining equations naturally follow as its consequences.

For any $\{(l_h,r_h)\}_{h=1}^n\in NCPP_k(2n)$ with $k\in\{2,3,\ldots,n-1\}$, the definition dictates that $l_n=2n-k$, which means that there are $k$'s right indices to the right of $l_n$. Additionally, the number of the right indices between $l_{n-1}$ and $l_n$ (i.e., $l_n-1-l_{n-1}$) can vary form 0 to $n-k$ because

$\bullet$ the increasing property of $l_h$'s implies that $l_n-1-l_{n-1}\ge0$;

$\bullet$ the inequality $l_n-1-l_{n-1}\le n-k$ holds due to the following:
\begin{align*}n=\vert\{\text{right indices}\}\vert&\ge \vert\{\text{right indices to the right of }l_n\}\vert\\
&+\vert\{\text{right indices between $l_{n-1}$ and }l_n \} \vert\\
&=2n-k+l_n-1-l_{n-1}
\end{align*}

Let, for any $m\in\{0,1,\ldots,n-k\}$
\begin{equation}\label{DCT04e}
NCPP_{k,m}(2n):=\big\{ \{(l_h,r_h)\}_{h=1}^n\in NCPP_k(2n):l_{n-1}=l_n-1-m \big\}
\end{equation}
Then

$\bullet$ $NCPP_{k}(2n)=\cup_{m=0}^{n-k}NCPP_{k,m}(2n)$;

$\bullet$ the set $NCPP_{k,m}(2n)$'s, for $m\in\{0,1,\ldots,n-k\}$, are pairwise disjoint.

We introduce the application $\pi: \{(l_h,r_h)\}_{h=1}^n\longmapsto \{(l_h,r_h)\}_{h=1}^{n-1}$. According to Corollary \ref{DCT-rem2-6}, $\{(l_h,r_h)\}_{h=1}^{n-1}$ forms a non--crossing pair partition of the set $\{1,\ldots, 2n\}\setminus \{l_n,l_n+1\}$ (where, the non--crossing property of $\{(l_h,r_h)\}_{h=1}^n$ implies that $r_n=l_n+1$). In more detail,

$\bullet$ $\pi$ is clearly a bijection;

$\bullet$ as a pair partition of the set $\{1,\ldots,2n\} \setminus \{l_n,l_n+1\}$,  there are $2n-l_n-1+m$ right indices to the right of the last left index $l_{n-1}$, where, $2n-l_n-1+m=2(n-1)-l_{n-1}$ since $m=l_n-l_{n-1}-1$.\\
So, $NCPP_{k,m}(2n)$ is a bijective correspondence with
$NCPP_{2(n-1)-l_{n-1}} (2(n-1))$ and
$2(n-1)-l_{n-1}=2n-l_n-1+m=k+m-1$. Summing up,
$NCPP_{k}(2n)=\cup_{m=0}^{n-k}NCPP_{k,m}(2n)$ is in a bijective
correspondence with  $\cup_{m=0}^{n-k}NCPP_{k+m-1}(2(n-1))$,
which, by denoting $h:=m+k-1$, is equal to $\cup_{h=k-1}
^{n-1}NCPP_{h}(2(n-1))$.  \end{proof}

The following result sets a connection between Catalan's triangle and non--crossing pair partitions.

\begin{theorem}\label{DCT05} For any $n\in\mathbb{N}^*$ and $k\in\{1,\ldots,n\}$,
\begin{equation}\label{DCT05a}
\big\vert NCPP_{k}(2n)\big\vert=C(n-1,n-k)=\frac{k}{n} \binom{2n-k-1}{n-1}
\end{equation}
\end{theorem}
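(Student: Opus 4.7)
The plan is to prove \eqref{DCT05a} by induction on $n$. Set $a_{n,k}:=|NCPP_k(2n)|$ and $b_{n,k}:=C(n-1,n-k)$; the goal is to show $a_{n,k}=b_{n,k}$ for all $n\in\mathbb{N}^*$ and $k\in\{1,\ldots,n\}$. Once this identification is in place, the second equality in \eqref{DCT05a} follows immediately from the explicit formula \eqref{DCT01a}.

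The boundary columns $k=1$ and $k=n$ are handled for every $n$ directly from Proposition \ref{DCT04}, with no induction needed. From \eqref{DCT04a} we read off $a_{n,n}=1=C(n-1,0)=b_{n,n}$, and from \eqref{DCT04b0} together with the identification $C(n-1,n-1)=C_{n-1}$ (immediate from \eqref{DCT01a}) we get $a_{n,1}=C_{n-1}=b_{n,1}$. This already settles the whole statement for $n=1$ and $n=2$, and supplies the left and right edges of the triangle for every larger $n$.

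For the inductive step, fix $n\ge3$, assume the identity at level $n-1$, and pick any $k\in\{2,\ldots,n-1\}$. Invoking the bijection \eqref{DCT04c2} and then the inductive hypothesis yields
$$a_{n,k}=\sum_{h=k-1}^{n-1}a_{n-1,h}=\sum_{h=k-1}^{n-1}C(n-2,n-1-h).$$
The substitution $j:=n-1-h$ converts this into $\sum_{j=0}^{n-k}C(n-2,j)$. Applying \eqref{DCT01a1} with $n$ replaced by $n-2$ and $m:=n-k$ (which lies in $\{1,\ldots,n-2\}$ since $k\in\{2,\ldots,n-1\}$, well within the range $0\le m\le n-2$ required there) gives $\sum_{j=0}^{n-k}C(n-2,j)=C(n-1,n-k)=b_{n,k}$, closing the induction.

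The genuine mathematical content is not the obstacle: the combinatorial reduction \eqref{DCT04c2} and the algebraic partial-sum identity \eqref{DCT01a1} are already in hand from the preceding material. The only thing that requires care is the book-keeping of the index shift $k\leftrightarrow n-k$, which is precisely what reconciles the recursion on the $NCPP$ side (where $k$ counts the number of right indices past $l_n$) with the recursion on the Catalan's triangle side (where the second argument counts from the left); verifying that the two boundary strips $k\in\{1,n\}$ line up with $C(n-1,n-1)=C_{n-1}$ and $C(n-1,0)=1$ respectively is the whole of the initialisation.
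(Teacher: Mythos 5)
Your proof is correct and follows essentially the same strategy as the paper: both arguments rest on the boundary values \eqref{DCT04a}, \eqref{DCT04b0} and the recursion \eqref{DCT04c2} from Proposition \ref{DCT04}, which together characterize the Catalan's triangle. The only difference is one of packaging — the paper verifies that $S(n,k):=\vert NCPP_{n+1-k}(2(n+1))\vert$ satisfies the system \eqref{DCT01} and then appeals to the uniqueness of its solution, whereas you close the argument by a direct induction on $n$ via the partial-sum identity \eqref{DCT01a1}, which amounts to carrying out that uniqueness argument explicitly.
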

\begin{remark}From \eqref{DCT04c3}, we have
\[
\frac{k}{n}\binom{2n-k-1}{n-1}\bigg\vert_{k=1}=\frac{1}{n}
\binom{2n-2} {n-1}=\frac{k}{n} \binom{2n-k-1}{n-1}\bigg\vert_{k=2}   \]
In fact, the first equality is trivial and second equality holds because
\begin{align*}
\frac{k}{n}\binom{2n-k-1}{n-1}\bigg\vert_{k=2}=\frac{1}{n}
\cdot\frac{2(n-1)(2n-3)!}{ (n-1)!(n-1)!} =\frac{1}{n}\binom{2n-2}{n-1}
\end{align*}
Moreover, with the help of \eqref{DCT05a}, one gets the following well--known equality
\begin{align*}C_n\overset{\eqref{DCT01d2}}=\big\vert NCPP(2n) \big\vert=&\sum_{k=1}^n\big\vert NCPP_{k}(2n)\big\vert\\
\overset{\eqref{DCT05a}}=&\sum_{k=1}^nC(n-1,n-k)=
\sum_{h=0}^{n-1}C(n-1,h)
\end{align*}
\end{remark}

\begin{proof}[{Proof of Theorem \ref{DCT05}}] The second equality in \eqref{DCT05a} is a consequence of \eqref{DCT01a} and we see the first.

Let
\begin{equation}\label{DCT05b}
S(n,n+1-k):=\big\vert NCPP_{k}(2(n+1))\big\vert \,,\quad \text{for any } n\in\mathbb{N}\text{ and }1\le k\le n+1
\end{equation}
Then,
\begin{align}
S(n,0)&=S(n,n+1-k)\Big\vert_{k=n+1}=\big\vert NCPP_{n+1}(2(n+1)))\big\vert
\overset{\eqref{DCT04a}}{=}1,\quad\text{for any } n\in\mathbb{N} \label{DCT05b1}
\end{align}

\begin{equation}\label{DCT05b2}
S(n,1)=S(n,n+1-k)\Big\vert_{k=n}=\big\vert NCPP_{n}(2(n+1)) \big\vert\overset{\eqref{DCT04b1}}{=}n,\quad\text{for any } n\in\mathbb{N}^*
\end{equation}
and
\begin{align}
&S(n+1,n+1)=S(n+1,n+2-k)\Big\vert_{k=1}=\big\vert NCPP_{1}(2(n+2))\big\vert\notag\\
\overset {\eqref{DCT04b0}} {=}&\big\vert NCPP(2(n+1))\big\vert
\overset{\eqref{DCT04c3}} {=}\big\vert NCPP_{2}(2(n+2))\big\vert\notag\\
=&S(n+1,n+2-k)\Big \vert_{k=2} =S(n+1,n)\,,\quad\text{for any } n\in\mathbb{N}^*    \label{DCT05b3}
\end{align}
Moreover, we obtain
\begin{equation}\label{DCT05c}
S(n+1,k)-S(n+1,k-1)=\big\vert NCPP_{n+1-k}(2(n+1))\big\vert =S(n,k)
\end{equation}
because for any $1\le k\le n+1$
\begin{align}\label{DCT05c1}
&S(n+1,k)=S(n+1,n+2-(n+2-k)) \notag\\
=&\big\vert NCPP_{n+2-k}(2(n+2)) \big\vert\overset{\eqref{DCT04c2}}{=} \sum_{h=n+1-k}^{n+1}\big\vert NCPP_h(2(n+1))\big\vert
\end{align}
and
\begin{align}\label{DCT05c2}
&S(n+1,k-1)=S(n+1,n+2-(n+3-k)) \notag\\
=&\big\vert NCPP_{n+3-k}(2(n+2))\big\vert\overset {\eqref{DCT04c2}}{=} \sum_{h=n+2-k}^{n+1} \big\vert NCPP_h(2(n+1))\big\vert
\end{align}

In summary, $\{S(n,k)\}_{0\le k\le n}$ satisfies the system \eqref{DCT01}, and the uniqueness of its solution leads to $S(n,k)=C(n,k)$ for any $n\in \mathbb{N}$ and $0\le k\le n$. Therefore, \eqref{DCT05b} and the second equality in \eqref{DCT05a} confirm the thesis. \end{proof}

\section{ Solving the Catalan's triangle system \eqref{DCT25a}}\label{DCTsec0a} 

This section is devoted to solving the Catalan's triangle system \eqref{DCT25a}. The proof is divided into several propositions.

\begin{theorem}\label{DCT25} Let $\{x_{n,m}\}_{n\in \mathbb{N}^*,\,1\le m\le n}\subset\mathbb{C}$. The following statements are equivalent:

$\bullet$ $\{x_{n,m}\}_{n\in \mathbb{N}^*,\,1\le m\le n}$ satisfies the Catalan's triangle system \eqref{DCT25a};

$\bullet$ $\{x_{n,m}\}_{n\in \mathbb{N}^*,\,1\le m\le n}$ is determined by $\{x_{n,1}\}_{n\in \mathbb{N}^*}$ using the following formula:
\begin{align}\label{DCT25b}
x_{n,m}=\sum_{h=0}^{n-m}C_{m-1}(h,h+m-2)x_{n-m-h+1,1},\ \text{for any } n\ge2 \text{ and } 2\le m\le n
\end{align}
Hereinafter, $\{C_{m}(n,k)\} _{n\in \mathbb{N},\, 0\le k\le n+m-1}$ is the {\it Catalan's trapezoid of the order $m$}.
\end{theorem}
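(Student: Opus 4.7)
Since the two statements are equivalent, I will prove the direction from the system \eqref{DCT25a} to the explicit formula \eqref{DCT25b}; the converse then follows by direct substitution, using the same key identity. First, observe that, for every input sequence $\{x_{k,1}\}_{k \in \mathbb{N}^*}$, the system \eqref{DCT25a} determines each $x_{n,m}$ (for $2 \le m \le n$) linearly and uniquely. Writing $x_{n,m} = \sum_{k=1}^{n-m+1} a_{n,m}^{(k)}\, x_{k,1}$ with $a_{n,1}^{(k)} = \delta_{n,k}$, substituting into \eqref{DCT25a}, and matching coefficients of each $x_{k,1}$, one obtains the scalar recursion
$$a_{n+1,m+1}^{(k)} = \sum_{j=m}^{n} a_{n,j}^{(k)}, \qquad n \ge 1,\ 1 \le m \le n.$$
A change of variable $h = n - m - k + 1$ shows that the formula \eqref{DCT25b} amounts to the claim $a_{n,m}^{(k)} = C_{m-1}(n-m-k+1,\, n-k-1)$ for $m \ge 2$.

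The heart of the argument is therefore to verify that these candidate coefficients satisfy the above recursion. Setting $T := n - k$, the task reduces, for $m \ge 2$ and $T \ge m-1$, to the single Catalan's trapezoid identity
$$C_m(T-m+1,\, T) \;=\; \sum_{i=m-1}^{T} C_i(T-i,\, T-1),$$
with an analogous variant for $m = 1$ (which receives an extra $\delta_{T,0}$ term because $a_{\cdot,1}^{(k)}$ contributes the Kronecker delta and $C_0$ is not defined). To establish this identity using the explicit formula \eqref{DCT00}, I would split off the boundary term $i = T$, which contributes $C_T(0,T-1) = 1$ from the first branch of \eqref{DCT00}, and treat each $i \in \{m-1,\dots,T-1\}$ by the second branch as a difference of two binomials. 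After the substitution $j = 2T-i-1$ in the first resulting sum and $\ell = T-i-1$ in the second, both sums collapse via the hockey-stick identity $\sum_{j=r}^{N} \binom{j}{r} = \binom{N+1}{r+1}$, yielding exactly $\binom{2T-m+1}{T} - \binom{2T-m+1}{T-m}$, which equals $C_m(T-m+1,T)$ by \eqref{DCT00}.

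The principal obstacle is careful bookkeeping at the boundaries of the Catalan's trapezoid, where \eqref{DCT00} switches between its two branches and summation ranges must be adjusted accordingly; in particular the endpoint $i = T$ falls under the first branch while the interior indices use the second. Verifying the edge cases ($m = 1$, $T = m-1$, and the base step $n = m = 2$) requires separate attention. I expect the paper to organize this as a chain of propositions: first extracting the coefficient recursion from \eqref{DCT25a}, then isolating and proving the above diagonal-sum identity on the Catalan's trapezoid, and finally assembling the closed-form expression. The boundary conditions \eqref{DCT-CTSbou} play no role in this argument, which concerns only the recurrence structure of the system.
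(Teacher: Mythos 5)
Your proposal is correct, but it takes a genuinely different route from the paper. The paper first proves uniqueness given $\{x_{n,1}\}$ (Proposition \ref{DCT25d}), then computes $x_{n,2}$ by a fairly long telescoping argument (Proposition \ref{DCT22}), and finally inducts on $m$ using the derived relation $x_{n,r+1}=x_{n,r}-x_{n-1,r-1}$ from \eqref{DCT25e1}; the whole induction rests on the \emph{three-term} identity \eqref{DCT23g9}, $C_{m}(k,k+m-1)+C_{m-2}(k+1,k+m-2)=C_{m-1}(k+1,k+m-1)$, which links trapezoids of three consecutive orders and is a short binomial computation. You instead plug the candidate coefficients directly into the full summation recursion, which reduces the theorem to the diagonal-sum identity $\sum_{i=m-1}^{T}C_i(T-i,T-1)=C_m(T-m+1,T)$ --- this is exactly, after reindexing, the identity \eqref{DCT27a} that the paper obtains only \emph{afterwards as a corollary} of the theorem, so you rightly cannot cite it and must prove it independently; your hockey-stick computation does this correctly (the two collapsed sums give $\binom{2T-m+1}{T}-\binom{2T-m+1}{T+1}$, and $\binom{2T-m+1}{T+1}=\binom{2T-m+1}{T-m}$ matches the second branch of \eqref{DCT00}). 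Your approach is more direct and dispenses with the special $m=2$ computation entirely; its cost is that the key identity is a full sum rather than the paper's simpler local difference, and the bookkeeping you flag ($i=T$ endpoint in the first branch, the $m=1$ row carrying $a^{(k)}_{n,1}=\delta_{n,k}$, which reduces via $C_1(T,T)=C_2(T-1,T)$ from \eqref{DCT23g6} to the $m=2$ case of the identity) is real but all checks out. Conversely, the paper's inductive route yields \eqref{DCT27a} as a free by-product, whereas in your scheme it is the input.
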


To prove this theorem, let's begin by examining  some elementary properties of the Catalan's trapezoids, which will be useful in the proof of Theorem \ref{DCT25}.

\begin{lemma}\label{DCT-le01} The Catalan's trapezoids $\big\{\{C_{m}(n,k)\} _{n\in \mathbb{N},\, 0\le k\le n+m-1}:m\in\mathbb{N}^*\big\}$ exhibit the following properties:
\begin{equation}\label{DCT23d0} C_m(r,0)=C_m(0,h)=1,\quad \text{for any } m\in \mathbb{N}^*,\ r\in   \mathbb{N}\text{ and }0\le h\le m-1
\end{equation}
\begin{equation}\label{DCT23d0a}
C_{m}(1,k):=\begin{cases}k+1, &\text{ if } 0\le k\le m-1\\ k, &\text{ if } k=m
\end{cases}\,,\quad\text{for any } m\in \mathbb{N}^*
\end{equation}
\begin{align}\label{DCT23g6}
C_{1}(n+1,n+1)=C_{1}(n+1,n)=C_{n+1}=C_{2}(n,n+1),\quad\text{for any } n\in\mathbb{N}
\end{align}
and
\begin{align}\label{DCT23g9}
&C_{m}(k,k+m-1)+C_{m-2}(k+1,k+m-2)\notag\\
=&C_{m-1} (k+1,k+m-1),\quad\text{for any } m\ge 3\text{ and } k\ge 0
\end{align}
\end{lemma}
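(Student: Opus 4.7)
The approach is uniformly computational: each of the four statements is verified by direct substitution into the explicit formula \eqref{DCT00}, with the only subtlety being the bookkeeping needed to determine which of the two cases of \eqref{DCT00} applies to each term. Accordingly, before starting I would fix the mental checklist: for $C_m(n,k)$, the first case applies exactly when $0\le k\le m-1$, and the second case applies when $m\le k\le n+m-1$.

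For \eqref{DCT23d0}, both $C_m(r,0)$ and $C_m(0,h)$ for $0\le h\le m-1$ sit in the first case of \eqref{DCT00}, and they evaluate respectively to $\binom{r}{0}=1$ and $\binom{h}{h}=1$. For \eqref{DCT23d0a}, when $0\le k\le m-1$ the first case yields $\binom{k+1}{k}=k+1$, and for $k=m$ the second case yields $\binom{m+1}{m}-\binom{m+1}{0}=(m+1)-1=m$.

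For \eqref{DCT23g6}, the three equalities $C_1(n+1,n+1)=C_1(n+1,n)=C_{n+1}=C_2(n,n+1)$ are all a matter of binomial arithmetic. Substituting $m=1$ into the second case of \eqref{DCT00} gives $C_1(n+1,n+1)=\binom{2n+2}{n+1}-\binom{2n+2}{n}$, which I would simplify by factoring out $\binom{2n+2}{n+1}$ and reducing $1-\tfrac{n+1}{n+2}=\tfrac{1}{n+2}$; similarly for $C_1(n+1,n)$ (after handling $n=0$ by the first case). For $C_2(n,n+1)$ I would treat $n=0$ by the first case of \eqref{DCT00} and use the second case for $n\ge 1$, then apply $\binom{2n+1}{n+1}=\binom{2n+1}{n}$ to match the expression already obtained for $C_1(n+1,n)$.

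The main obstacle will be \eqref{DCT23g9}, since it involves three trapezoid values and a mild boundary split. The plan is: when $k=0$, evaluate directly, noting that $C_m(0,m-1)=1$ falls into the first case of \eqref{DCT00} while $C_{m-2}(1,m-2)=m-2$ and $C_{m-1}(1,m-1)=m-1$ fall into the second case, giving $1+(m-2)=m-1$. When $k\ge 1$, all three entries land in the second case of \eqref{DCT00}; expanding and using the binomial symmetries $\binom{2k+m-1}{k+m-1}=\binom{2k+m-1}{k}$ and $\binom{2k+m-1}{k+m-2}=\binom{2k+m-1}{k+1}$ reduces the identity to
\begin{equation*}
\binom{2k+m-1}{k+1}-\binom{2k+m-1}{k-1}=\binom{2k+m}{k+1}-\binom{2k+m}{k},
\end{equation*}
which follows from two applications of Pascal's identity $\binom{N}{j}=\binom{N-1}{j}+\binom{N-1}{j-1}$ on the right-hand side.
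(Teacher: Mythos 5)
Your proposal is correct and follows essentially the same route as the paper: both verify everything by direct substitution into \eqref{DCT00}, with the same $k=0$ versus $k\ge 1$ case split for \eqref{DCT23g9} and the same reduction to $\binom{2k+m-1}{k+1}-\binom{2k+m-1}{k-1}$. The only cosmetic difference is at the very end, where the paper finishes by an explicit factorial computation while you invoke Pascal's identity twice on $\binom{2k+m}{k+1}-\binom{2k+m}{k}$, which is a slightly cleaner way to close the same argument.
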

\begin{proof}
\eqref{DCT23d0} and \eqref{DCT23d0a} are obviously direct consequences of \eqref{DCT00}.

The first two equalities in \eqref{DCT23g6} are trivial due to \eqref{DCT01} and the fact that $C_1(n,k)=C(n,k)$ for all $n\in \mathbb{N}$ and $0\le k\le n$; the third equality in \eqref{DCT23g6} can be easily proven as follows:

$\bullet$ For $n=0$, one has $C_{n+1}\Big\vert_{n=0}=C_1=1 \overset{\eqref{DCT23d0}}= C_{2}(n,n+1)\Big\vert_{n=0}$.

$\bullet$ For any $n\in\mathbb{N}^*$ (so $n+1\ge 2$), one has
\begin{align}\label{DCT23g7}
&C_{2}(n,n+1)\overset{\eqref{DCT00}}=
\binom{2n+1}{n+1}-\binom{2n+1}{n-1}
=\frac{(2n+2)!}{(n+1)!(n+2)!}=C_{n+1}
\end{align}

To prove \eqref{DCT23g9}, we start with the case of $k=0$. \eqref{DCT23d0} and \eqref{DCT23d0a} ensure that
\begin{align*}
&\Big[C_{m}(k,k+m-1)+C_{m-2}(k+1,k+m-2)\Big]_{k=0}\\
=&1+m-2=C_{m-1}(1,m-1)=C_{m-1} (k+1,k+m-1)\Big\vert_{k=0}
\end{align*}
For $k\ge1$, one has $k+m-1\ge m$ and $k+m-2\ge m-1 > m-2$. Moreover, the assumption $m\ge3$ gives $m-1> m-2\ge1$ and so \eqref{DCT00} says
\begin{align*}C_{m}(k,k+m-1)&=\binom{2k+m-1}{k}-
\binom{2k+m-1}{k-1}\\
C_{m-2}(k+1,k+m-2)&=\binom{2k+m-1}{k+1}-
\binom{2k+m-1}{k}
\end{align*}
Therefore, by taking the difference of above two terms, we find
that
\begin{align*}
&C_{m}(k,k+m-1)+C_{m-2}(k+1,k+m-2)= \binom{2k+m-1}{k+1}-
\binom{2k+m-1}{k-1}\\
=&\frac{(2k+m-1)!}{(k+1)!(k+m)!}\Big( (m+k-1)(m+k) -k(k+1)\Big)=\frac{(2k+m)!(m-1)}{(k+1)!(k+m)!}\\
=&\binom{2k+m}{k+1}-\binom{2k+m}{k}=C_{m-1} (k+1,k+m-1)
\end{align*}      \end{proof}  

The first step in proving Theorem \ref{DCT25} is to find a solution for $x_{n,n}$'s.

\begin{proposition}\label{DCT25c}Any solution of the system \eqref{DCT25a} satisfies the following equalities:
\begin{align}\label{DCT25c1}
x_{n,n}=x_{1,1}=\sum_{h=0}^{n-m} C_{m-1}(h,h+m-2) x_{n-m-h+1,1}\Big\vert_{m=n},\quad\text{for any } n\ge2
\end{align}
\end{proposition}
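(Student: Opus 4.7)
The statement reduces to two essentially elementary observations, and the plan is to treat them in order.

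First, I would prove that $x_{n,n} = x_{1,1}$ for every $n \ge 1$ by direct telescoping from the defining recursion \eqref{DCT25a}. Specifically, taking the admissible choice $k = n$ (which lies in $\{1,\dots,n\}$) in \eqref{DCT25a}, the sum on the right collapses to a single term and yields
\begin{align*}
x_{n+1,n+1} = \sum_{j=n}^{n} x_{n,j} = x_{n,n}, \qquad \text{for every } n \in \mathbb{N}^*.
\end{align*}
A trivial induction on $n$ then delivers $x_{n,n} = x_{1,1}$ for all $n \ge 1$, which is the first equality in \eqref{DCT25c1}.

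Next, I would verify that the right-hand side of \eqref{DCT25c1}, evaluated at $m = n$, collapses to $x_{1,1}$. Setting $m = n$ forces $n - m = 0$, so only the single index $h = 0$ contributes, leaving
\begin{align*}
\sum_{h=0}^{n-m} C_{m-1}(h,h+m-2)\, x_{n-m-h+1,1}\bigg|_{m=n} = C_{n-1}(0,n-2)\, x_{1,1}.
\end{align*}
Since $n \ge 2$, one has $0 \le n-2 \le (n-1)-1$, so the coefficient $C_{n-1}(0,n-2)$ falls in the regime governed by the boundary formula \eqref{DCT23d0}, which gives $C_{n-1}(0,n-2) = 1$. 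Hence the sum equals $x_{1,1}$, completing the chain of equalities in \eqref{DCT25c1}.

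There is really no obstacle here: the first equality is immediate from the trivial instance $k = n$ of the recursion, and the second is a bookkeeping check that the claimed formula degenerates correctly at $m = n$ thanks to the boundary values of the Catalan trapezoid recorded in Lemma \ref{DCT-le01}. The proposition therefore serves mostly as the base case which, together with the induction on $n - m$ presumably carried out in the subsequent propositions, will establish Theorem \ref{DCT25}.
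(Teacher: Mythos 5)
Your proof is correct and follows essentially the same route as the paper: the first equality by telescoping the single-term instance of the recursion \eqref{DCT25a}, and the second by noting that only $h=0$ survives at $m=n$ and invoking $C_{n-1}(0,n-2)=1$ from \eqref{DCT23d0}. Nothing to add.
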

\begin{proof}
\eqref{DCT25a} trivially yields the first equality in \eqref{DCT25c1}:
\[x_{n,n}=\sum_{k=n-1}^{n-1}x_{n-1,k} =x_{n-1,n-1}=\ldots=x_{1,1}
\]
The second equality in \eqref{DCT25c1} is also straightforward:
\[\sum_{h=0}^{n-m} C_{m-1}(h,h+m-2)x_{n-m-h+1,1} \Big\vert_{m=n}=C_{m-1}(0,m-2)x_{1,1} \overset{\eqref{DCT23d0}}{=}x_{1,1}
\]       \end{proof}

As the second step of the proof of Theorem \ref{DCT25}, we set the uniqueness of solution for the system \eqref{DCT25a}: {\it This system has at most one solution whenever $\{x_{n,1}\}_{n\in\mathbb {N}^*}$ is given}.

\begin{proposition}\label{DCT25d} In the case where $x_{n,1}=0$ for any $n\in\mathbb{N}^*$,  the system \eqref{DCT25a} only has the trivial solution $x_{n,m}=0$ for any $n\in \mathbb{N}^*$ and $1\le m\le n$.
Moreover the solution to the system \eqref{DCT25a} is {\bf unique up to the boundary condition}: for any solutions $\{y_{n,m}\}_{n\in \mathbb{N}^*,\,1\le m\le n}$ and $\{z_{n,m}\}_{n\in \mathbb{N}^*,\,1\le m\le n}$ of the system \eqref{DCT25a},  $y_{n,m}=z_{n,m}$ for all $n\in\mathbb{N}^*$ and  $1\le m\le n$ if and only if $y_{n,1}=z_{n,1}$ for all $n\in\mathbb{N}^*$.
\end{proposition}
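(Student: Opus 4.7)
The plan is to reduce the uniqueness-up-to-boundary claim to the triviality claim by taking the difference of two candidate solutions, and then to establish the triviality claim by a straightforward row-by-row induction on $n$.

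First I would prove the triviality statement: if $x_{n,1}=0$ for every $n\in\mathbb{N}^*$, then $x_{n,m}=0$ for all $1\le m\le n$. The key observation is that the Catalan's triangle system \eqref{DCT25a} expresses each entry $x_{n+1,k+1}$ of row $n+1$ in columns $2,\ldots,n+1$ as a linear combination of entries of row $n$ alone. Therefore, knowledge of $x_{n+1,1}$ together with the entire row $n$ determines the whole of row $n+1$. I would induct on $n$: the base case $n=1$ is just $x_{1,1}=0$, which is part of the hypothesis; for the inductive step, assuming $x_{n,j}=0$ for every $1\le j\le n$, I have $x_{n+1,1}=0$ by hypothesis, and for every $m\in\{2,\ldots,n+1\}$ the system \eqref{DCT25a} applied with $k:=m-1$ gives $x_{n+1,m}=\sum_{j=m-1}^{n}x_{n,j}=0$ by the inductive hypothesis.

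For the uniqueness-up-to-boundary part, I would use the linearity and homogeneity of \eqref{DCT25a}. Let $\{y_{n,m}\}$ and $\{z_{n,m}\}$ be two solutions of the system. The forward implication $y_{n,m}=z_{n,m}$ for all valid $(n,m)$ $\Rightarrow$ $y_{n,1}=z_{n,1}$ for all $n$ is immediate by restriction to $m=1$. For the reverse, set $w_{n,m}:=y_{n,m}-z_{n,m}$; then $\{w_{n,m}\}$ again satisfies \eqref{DCT25a} because the system is linear and homogeneous, and the hypothesis $y_{n,1}=z_{n,1}$ forces $w_{n,1}=0$ for every $n\in\mathbb{N}^*$. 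The first part of the proposition, applied to $\{w_{n,m}\}$, then yields $w_{n,m}=0$ for all valid $(n,m)$, i.e.\ $y_{n,m}=z_{n,m}$.

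I do not foresee a genuine obstacle here; the argument is essentially a bookkeeping exercise once the correct induction direction is identified. The only subtlety worth double-checking is the index range in \eqref{DCT25a}: the system supplies $x_{n+1,m}$ precisely for $m\in\{2,\ldots,n+1\}$, and the prescribed first column $x_{n+1,1}$ fills the remaining entry, so the induction indeed covers all pairs $1\le m\le n+1$ at row $n+1$.
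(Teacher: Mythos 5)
Your proposal is correct and follows essentially the same route as the paper: row-by-row induction to show that vanishing first-column entries force the trivial solution, followed by the standard difference-of-solutions argument exploiting linearity and homogeneity. The only cosmetic difference is that the paper seeds the induction using the identity $x_{n,n}=x_{1,1}$ from Proposition \ref{DCT25c}, whereas your induction handles the base case directly, which works equally well.
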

\begin{proof} It is sufficient to demonstrate $y_{n,m}=z_{n,m}$ for all $n\in\mathbb{N}^*$ and $1\le m\le n$ by assuming that $y_{n,1}=z_{n,1}$ for all $n\in\mathbb{N}^*$.

The assumption gives $x_{2,1}=x_{1,1}=0$, and \eqref{DCT25c1} gives $x_{2,2}=x_{1,1}=0$. Therefore,  $x_{n,k}=0$ for any $n\in\{1,2\}$ and $k\in\{1,\ldots,n\}$.

Since $x_{n,1}=0$ for any $n\in\mathbb{N}^*$, assuming $x_{n,k}=0$ for all $2\le k\le n$ is the same as to assume
\begin{align}\label{DCT23f0}
x_{n,k}=0,\quad \text{for any } k\in\{1,\ldots,n\}
\end{align}
Therefore,
\begin{align}\label{DCT23f1}
x_{n+1,1}{=}0,\quad
x_{n+1,m+1}=\sum_{k=m}^{n}x_{n,k}\overset{\eqref{DCT23f0}} {=}0,\quad \text{for any } 1\le m\le n
\end{align}
By induction, we conclude that $x_{n,m}=0$ for any $n\in \mathbb{N}^*$ and $1\le m\le n$.

If $\{y_{n,m}\}_{n\in\mathbb{N}^*,\,1\le m\le n}$ and $\{z_{n,m}\}_{n\in\mathbb{N}^*,\,1\le m\le n}$ both solve the system \eqref{DCT25a} and $y_{n,1}=z_{n,1}$ for all $n\in \mathbb{N}^*$, let's denote $u_{n,m}:= z_{n,m}-y_{n,m}$ for all $n\in\mathbb{N}^*$ and $1\le m \le n$. We find that $\{u_{n,m}\}_{n\in \mathbb{N}^*,\, 1\le m\le n}$ also satisfies the system \eqref{DCT25a} and $u_{n,1}=0$ for all $n\in\mathbb{N}^*$. Therefore, we can conclude that $u_{n,m}=0$ (i.e., $z_{n,m}= y_{n,m}$) for all $n\in\mathbb{N}^*$ and $1\le m\le n$.  \end{proof}   

From now on, we will begin solving the system \eqref{DCT25a}. Initially, in the following Proposition \ref{DCT22} and Corollary \ref{DCT25f}, we will partially solve the system \eqref{DCT25a} to find $\{x_{n,2}\}_{n\in \mathbb{N}^*}$ and $\{x_{n,3}\}_{n\in\mathbb{N}^*}$. Subsequently, in Proposition \ref{DCT23}, we will provide
generic $\{x_{n,m}\}_{n\in\mathbb{N}^*}$ for all $m\in \{2,\ldots,n\}$.

\begin{proposition}\label{DCT22} Let $\{x_{n,m}\}_{n\in\mathbb{N}^*,\, 1\le m\le n}$ be a solution of the system \eqref{DCT25a}. Then   for any $n\ge3$, we can express $x_{n,2}$ as follows:
\begin{align}\label{DCT22a}
x_{n,2}&=\sum_{h=1}^m C(m-1,m-h)\sum_{2\le k_{m+1-h}\le\ldots \le k_m\le n-m} x_{n-m,k_m}\notag\\
&+\sum_{k=0}^{m-1}C_kx_{n-k-1,1},
\qquad\text{for any }  1\le m\le n-2
\end{align}
where, $\{C(n,k)\}_{n\in\mathbb{N},\,0\le k\le n}$ represents the Catalan's triangle, and $C_k$'s are the Catalan numbers. In particular,
\begin{align}\label{DCT22b}
x_{n,2}=\sum_{k=0}^{n-2}C_kx_{n-k-1,1},\quad\text{for any } n\ge3
\end{align}
\end{proposition}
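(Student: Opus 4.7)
The plan is to prove \eqref{DCT22a} by induction on $m \in \{1,\ldots,n-2\}$ (with $n\ge 3$ fixed), and then obtain \eqref{DCT22b} as the specialization at $m=n-2$. The base case $m=1$ follows immediately from \eqref{DCT25a} with $k=1$: this gives $x_{n,2}=\sum_{j=1}^{n-1}x_{n-1,j}$, which, after separating $j=1$ and using $C(0,0)=1=C_0$, matches the right-hand side of \eqref{DCT22a} at $m=1$.

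For the inductive step $m\to m+1$ (valid while $m+1\le n-2$), the key move is to substitute $x_{n-m,k_m}=\sum_{j=k_m-1}^{n-m-1}x_{n-m-1,j}$ (an instance of \eqref{DCT25a}, applicable since $k_m\ge2$) into the first sum of the $m$-level formula, and then interchange summations to collect the coefficient of each $x_{n-m-1,j}$. The $j=1$ contribution forces $k_m=2$, hence every chain index equals $2$, and gives $\bigl(\sum_{h=1}^{m}C(m-1,m-h)\bigr)x_{n-m-1,1}=C_m\,x_{n-m-1,1}$ using the identity $\sum_{i=0}^{m-1}C(m-1,i)=C_m$ (read off from \eqref{DCT01a1} with $C(m,m-1)=C(m,m)=C_m$); this is exactly what is needed to extend the Catalan tail from $\sum_{k=0}^{m-1}$ to $\sum_{k=0}^{m}$.

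For $j\ge2$, the inner count of chains $2\le k_{m+1-h}\le\ldots\le k_m\le j+1$ equals $\binom{j+h-1}{h}$ (weakly-increasing sequences of length $h$ in a set of size $j$), so the coefficient of $x_{n-m-1,j}$ becomes $\sum_{h=1}^{m}C(m-1,m-h)\binom{j+h-1}{h}$. The $(m+1)$-level formula, after fixing $k_{m+1}=j$ in its chain-sum, predicts instead $\sum_{h=1}^{m+1}C(m,m+1-h)\binom{j+h-3}{h-1}$. Matching these two expressions is the combinatorial core of the argument, and I expect it to be the main obstacle. My plan is to verify it by combining Pascal's rule $\binom{j+h-1}{h}=\binom{j+h-2}{h}+\binom{j+h-2}{h-1}$ with the Catalan-triangle recurrence $C(m,k)=C(m,k-1)+C(m-1,k)$ from \eqref{DCT01a0}: the Pascal split peels one piece off the left that already matches a block of the right, while the Catalan recurrence, applied to $C(m,m+1-h)$ on the right, produces the complementary block after a shift of summation index. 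The bookkeeping of boundary terms (where $C(\cdot,\cdot)$ or the binomial vanishes) is delicate and is where the work concentrates.

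Once \eqref{DCT22a} is established, \eqref{DCT22b} follows by setting $m=n-2$: the constraint $2\le k_{n-1-h}\le\ldots\le k_{n-2}\le2$ forces every $k_i=2$, so $x_{n-m,k_m}=x_{2,2}=x_{1,1}$ by Proposition~\ref{DCT25c}, and the prefactor $\sum_{h=1}^{n-2}C(n-3,n-2-h)=\sum_{i=0}^{n-3}C(n-3,i)=C_{n-2}$ (again by the same Catalan identity) merges with the explicit Catalan-number sum to yield $\sum_{k=0}^{n-2}C_k\,x_{n-k-1,1}$, as required.
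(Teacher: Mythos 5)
Your skeleton coincides with the paper's: both prove \eqref{DCT22a} by induction on $m$ with the same base case $m=1$, the same extraction of the coefficient of $x_{n-m-1,1}$ (forcing the whole chain to equal $2$ and summing $\sum_{p=0}^{m-1}C(m-1,p)=C(m,m-1)=C_m$), and the same specialization $m=n-2$ (via $x_{2,2}=x_{1,1}$ and $\sum_{h=1}^{n-2}C(n-3,n-2-h)=C_{n-2}$) to deduce \eqref{DCT22b}. The divergence is in how the inductive step is closed for the terms $x_{n-m-1,j}$ with $j\ge2$. The paper never collapses the nested chain sums into binomial coefficients: it keeps them as the quantities $A_{m,p}$, $B_{m,p}$ of \eqref{DCT22f0}, telescopes $A_{m,p}=A_{m,p+1}+B_{m,p}$, and then only needs $\sum_{p=0}^{h}C(m-1,p)=C(m,h)$ to regroup; no binomial identity enters. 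You instead count the chains explicitly (the count $\binom{j+h-1}{h}$ of weakly increasing $h$-tuples in a $j$-element set is correct, as is $\binom{j+h-3}{h-1}$ on the other side), which reduces the step to the identity
\begin{equation*}
\sum_{h=1}^{m}C(m-1,m-h)\binom{j+h-1}{h}\;=\;\sum_{h=1}^{m+1}C(m,m+1-h)\binom{j+h-3}{h-1},\qquad j\ge2 .
\end{equation*}

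That identity is true, but as written you have not proved it: you explicitly defer it as the main obstacle, so the inductive step is open and this is the one genuine gap. It can be closed more cleanly than by the Pascal-plus-recurrence term chasing you sketch: substituting $p=m+1-h$ on the right and $p=m-h$ on the left, the claim becomes $\sum_{p=0}^{m}C(m,p)\binom{j+m-p-2}{m-p}=\sum_{p=0}^{m-1}C(m-1,p)\binom{j+m-p-1}{m-p}$. Expand $C(m,p)=\sum_{i=0}^{p}C(m-1,i)$ for $p\le m-1$ (from \eqref{DCT01a1}) and $C(m,m)=C_m$, interchange the order of summation, and apply the hockey-stick identity $\sum_{s=1}^{N}\binom{j-2+s}{s}=\binom{j-1+N}{N}-1$ with $N=m-i$; the accumulated $-1$'s contribute $-\sum_{i=0}^{m-1}C(m-1,i)=-C_m$, which cancels the $p=m$ term, leaving exactly the left-hand side. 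With that lemma supplied your argument is complete and is a legitimate alternative; without it, the paper's telescoping of the uncollapsed chain sums is the more economical route, since it derives the same conclusion from the single Catalan-triangle summation formula alone.
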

\begin{proof} After establishing \eqref{DCT22a}, we can readily obtain \eqref{DCT22b} as follows: By substituting $m=n-2$ into equation \eqref{DCT22a}, we get:
\begin{align}\label{DCT22d0}
x_{n,2}&=\sum_{k=0}^{n-3}C_kx_{n-k-1,1}\notag\\
&+\sum_{h=1}^{n-2} C(n-3,n-2-h)\sum_{2\le k_{n-1-h}\le\ldots \le k_{n-2}\le n-(n-2)} x_{n-(n-2),k_{n-2}}\notag\\
&\overset{\eqref{DCT25c1}}=\sum_{k=0}^{n-3}C_kx_{n-k-1,1} +x_{2,2}\sum_{h=1}^{n-2} C(n-3,n-2-h)
\end{align}
By comparing \eqref{DCT22b} and \eqref{DCT22d0}, we can conclude that \eqref{DCT22b} is equivalent to
\begin{align}\label{DCT22d5}
x_{2,2}\sum_{h=1}^{n-2} C(n-3,n-2-h)=x_{1,1}C_{n-2},\quad \text{for any } n\ge3
\end{align}

Thanks to the well--known properties of Catalan's triangles:
\begin{align}\label{DCT22d1}
&C(0,0)=1,\quad C(m,m)=C(m,m-1)=C_m\notag\\
&\sum_{p=0}^{h}C(m-1,p) =C(m,h),\ \text{for any } m\ge 1,\ 0\le h\le m-1
\end{align}
it is concluded that
\begin{align}\label{DCT22d2}\sum_{h=1}^{n-2} C(n-3,n-2-h)\overset{p:=n-2-h}{=} \sum_{p=0}^{n-3} C(n-3,p)=C(n-2,n-3)=C_{n-2}
\end{align}
As a consequence, we have proven \eqref{DCT22d5}:
\[x_{2,2}\sum_{h=1}^{n-2} C(n-3,n-2-h) \overset{\eqref{DCT25c1}}=x_{1,1}\sum_{h=1}^{n-2} C(n-3,n-2-h)\overset{\eqref{DCT22d2}}=x_{1,1}C_{n-2}
\]

Now, let's direct our attention to the proof of \eqref{DCT22a}. By introducing
\begin{align}
D_1(n,m)&:=\sum_{h=1}^m C(m-1,m-h)\sum_{2\le k_{m+1-h}\le\ldots \le k_m\le n-m} x_{n-m,k_m}\notag\\
D_2(n,m)&:=\sum_{k=0}^{m-1}C_kx_{n-k-1,1}\,,\qquad\quad \quad\text{for any } n\ge3,\ 1\le m\le n-2 \label{DCT22c}
\end{align}
\eqref{DCT22a} can be expressed as
\begin{align}\label{DCT22d}x_{n,2}=D_1(n,m) +D_2(n,m), \quad \text{for any } n\ge3,\ 1\le m\le n-2
\end{align}
Now, let's proceed to prove \eqref{DCT22d}. We start with the following equation:
\begin{align*}
x_{n,2}\overset{\eqref{DCT25a}}=\sum_{k=1}^{n-1}
x_{n-1,k}=x_{n-1,1}+\sum_{k=2}^{n-1}x_{n-1,k}
\end{align*}
where
\[x_{n-1,1}= C_0x_{n-1,1}\overset{\eqref{DCT25b}}=\sum_{k=0}^{m-1} C_kx_{n-k-1,1}\Big\vert_{m=1}=D_2(n,1)\]
and
\begin{align*}
&\sum_{k=2}^{n-1}x_{n-1,k} =C(0,0)\sum_{k=2}^{n-1}x_{n-1,k}\\
=& \sum_{h=1}^m C(m-1,m-h)\sum_{2\le k_{m+1-h}\le\ldots \le k_m\le n-m} x_{n-m,k_m}\Big\vert_{m=1}=D_1(n,1)
\end{align*}
These three formulae guarantee the equality in \eqref{DCT22d} for $m=1$, specifically,
\begin{align}\label{DCT22e}
x_{n,2}=D_1(n,1)+D_2(n,1)
\end{align}
So, by demonstrating, for any $1\le m\le n-3$,
\begin{align}\label{DCT22f}
D_1(n,m)+D_2(n,m)=D_1(n,m+1)+D_2(n,m+1) 
\end{align}
we can successfully conclude the proof. Now, let's proceed to verify \eqref{DCT22f}.
\begin{align}D_1(n,m)\overset{\eqref{DCT22c}}{:=}& \sum_{h=1}^m C(m-1,m-h)\sum_{2\le k_{m+1-h}\le\ldots \le k_m\le n-m} x_{n-m,k_m}\notag\\
\overset{j_r:=k_r-1}=&\sum_{h=1}^m C(m-1,m-h)\sum_{1\le j_{m+1-h}\le\ldots \le j_m\le n-m-1} x_{n-m,j_m+1}\notag\\
\overset{\eqref{DCT25a}}{=}&\sum_{h=1}^m C(m-1,m-h)\sum_{1\le j_{m+1-h}\le\ldots \le j_m\le j_{m+1}\le n-m-1} x_{n-m-1,j_{m+1}}\notag\\
\overset{p:=m-h}{=}&\ \sum_{p=0}^{m-1} C(m-1,p)\sum_{1\le j_{p+1}\le\ldots \le  j_{m+1}\le n-m-1} x_{n-m-1,j_{m+1}}\label{DCT22g}
\end{align}

Let's denote
\begin{align}
A_{m,r}&:=\sum_{1\le j_{r+1}\le\ldots\le  j_{m+1}\le n-m-1} x_{n-m-1,j_{m+1}}\notag\\
B_{m,r}&:=\sum_{2\le j_{r+1}\le\ldots\le  j_{m+1}\le n-m-1} x_{n-m-1,j_{m+1}},\ \text{for any } m\ge1,\ r\in\{0,1,\ldots,m\} \label{DCT22f0}
\end{align}
we can find
\begin{align}\label{DCT22f2}
A_{m,m}=&\sum_{1\le  j_{m+1}\le n-m-1} x_{n-m-1,j_{m+1}}
=x_{n-m-1,1}+B_{m,m}
\end{align}
and for any $p\in\{0,1,\ldots,m-1\}$,
\begin{align*}
A_{m,p}=&\sum_{1\le j_{p+1}\le\ldots\le  j_{m+1}\le n-m-1} x_{n-m-1,j_{m+1}}\\
=&\sum_{1\le j_{p+2}\le\ldots\le  j_{m+1}\le n-m-1} x_{n-m-1,j_{m+1}}+ \sum_{2\le j_{p+1}\le\ldots\le  j_{m+1}\le n-m-1} x_{n-m-1,j_{m+1}}\\
=&A_{m,p+1}+B_{m,p}
\end{align*}
By repeatedly using this formula, we obtain
\begin{align}\label{DCT22f1}
A_{m,p}=&A_{m,p+1}+B_{m,p}=A_{m,p+2}+B_{m,p+1}+B_{m,p} =\ldots=A_{m,m}+\sum_{h=p}^{m-1}B_{m,h}\notag\\
\overset{\eqref{DCT22f2}}=&x_{n-m-1,1}+ B_{m,m}+\sum_{h=p}^{m-1}B_{m,h} =x_{n-m-1,1}+ \sum_{h=p}^{m}B_{m,h}
\end{align}
Consequently,
\begin{align}\label{DCT22g0}
D_1(n,m)&\overset{\eqref{DCT22g}}=\sum_{p=0}^{m-1} C(m-1,p)A_{m,p} \overset{\eqref{DCT22f1}}{=} \sum_{p=0}^{m-1}C(m-1,p)\Big( x_{n-m-1,1} + \sum_{h=p}^{m}B_{m,h}\Big)\notag\\
&=\sum_{p=0}^{m-1}C(m-1,p)\big( x_{n-m-1,1} + B_{m,m}\big)+
\sum_{h=0}^{m-1}B_{m,h}\sum_{p=0}^{h}C(m-1,p)\notag\\
&\overset{\eqref{DCT22d1}}{=} \sum_{h=0}^{m}C(m,h)B_{m,h} +C_m x_{n-m-1,1}
\end{align}
On the other hand, we have
\begin{align}\label{DCT22c1}
&D_1(n,m+1)\overset{\eqref{DCT22c}}{:=} \sum_{h=1}^{m+1} C(m,m+1-h)\sum_{2\le k_{m+2-h}\le\ldots \le k_{m+1}\le n-(m+1)} x_{n-(m+1),k_{m+1}}\notag\\
&\overset{p:=m+1-h}{=}\sum_{p=0}^{m}C(m,p)\sum_{2\le k_ {p+1}\le\ldots\le k_{m+1}\le n-m-1}x_{n-m-1,k_{m+1}}=\sum_{p=0}^{m} C(m,p)B_{m,p}
\end{align}
Consequently, \eqref{DCT22f} is obtained as follows:
\begin{align*}
&D_1(n,m)+D_2(n,m)\overset{\eqref{DCT22c}}{=}D_1(n,m)+ \sum_{k=0}^{m-1}C_kx_{n-k-1,1}\notag\\
\overset{\eqref{DCT22g0}}{=}&
\sum_{h=0}^{m}C(m,h) B_{m,h}+C_m x_{n-m-1,1} +\sum_{k=0}^{m-1}C_kx_{n-k-1,1}\notag\\
\overset{\eqref{DCT22c1}}{=}& D_1(n,m+1)+D_2(n,m+1)
\end{align*} \end{proof}

\begin{corollary}\label{DCT25f}
If $\{x_{n,m}\}_{n\in\mathbb{N}^*,\,1\le m\le n}$ is a solution of the system \eqref{DCT25a}, then, for any $n\ge3$, the equality in \eqref{DCT25b} holds for $m=3$, specifically,
\begin{align}\label{DCT25e}
x_{n,3}=\sum_{h=0}^{n-3}C_{2}(h,h+1)x_{n-h-2,1}=\sum_{h=0}^{n-m}C_{m-1}(h,h+m-2)x_{n-m-h+1,1}\Big\vert_{m=3}
\end{align}
\end{corollary}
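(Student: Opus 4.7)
The plan is to reduce the case $m=3$ to the already-established case $m=2$ via a short telescoping identity obtained from the Catalan's triangle system itself, and then to relabel the coefficients using the explicit form of the Catalan trapezoid of order~$2$.

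The starting observation is that \eqref{DCT25a}, applied with $k=1$ and $k=2$ (and $n$ replaced by $n-1$), gives
\[
x_{n,2}=\sum_{j=1}^{n-1}x_{n-1,j}\qquad\text{and}\qquad x_{n,3}=\sum_{j=2}^{n-1}x_{n-1,j},
\]
so that
\[
x_{n,3}=x_{n,2}-x_{n-1,1},\qquad n\ge 3.
\]
This is the only consequence of the system that I would need at this stage; it is valid already for $n=3$, where both sides equal $x_{1,1}$ in view of \eqref{DCT25c1}.

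Next I would substitute the formula \eqref{DCT22b} from Proposition \ref{DCT22} for $x_{n,2}$, using $C_{0}=1$ to absorb the boundary term $x_{n-1,1}$:
\[
x_{n,3}=\sum_{k=0}^{n-2}C_{k}\,x_{n-k-1,1}-x_{n-1,1}=\sum_{k=1}^{n-2}C_{k}\,x_{n-k-1,1}.
\]
Reindexing by $h:=k-1$ turns this into $\sum_{h=0}^{n-3}C_{h+1}\,x_{n-h-2,1}$.

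The final step is to identify each coefficient $C_{h+1}$ with the corresponding Catalan trapezoid value, namely $C_{h+1}=C_{2}(h,h+1)$ for every $h\ge 0$; this identification is exactly the content of \eqref{DCT23g6} in Lemma \ref{DCT-le01}. I do not expect any serious obstacle here: both the telescoping $x_{n,3}=x_{n,2}-x_{n-1,1}$ and the Catalan-number identification $C_{2}(h,h+1)=C_{h+1}$ are already available from preceding results, so Corollary \ref{DCT25f} comes out as a one-step consequence of Proposition \ref{DCT22} together with Lemma \ref{DCT-le01}.
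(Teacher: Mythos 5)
Your proposal is correct and follows essentially the same route as the paper: the paper derives the general telescoping relation $x_{n,r+1}=x_{n,r}-x_{n-1,r-1}$ from \eqref{DCT25a} and specializes it to $r=2$, then substitutes \eqref{DCT22b}, drops the $C_0$ term, reindexes, and invokes \eqref{DCT23g6} exactly as you do. No differences worth noting.
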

\begin{proof}
The second equality in \eqref{DCT25e} is straightforward to verify, and we see the first. For any $n\ge3$,
\begin{align}\label{DCT25e1}
x_{n,r}\overset{\eqref{DCT25a}}=&\sum_{k=r-1}^{n-1}x_{n-1,k}=x_{n-1,r-1}+\sum_{k=r}^{n-1}x_{n-1,k}\notag\\ \overset{\eqref {DCT25a}}=&x_{n-1,r-1}+x_{n,r+1}, \quad \text{for any } 2\le r\le n
\end{align}
Therefore, by taking $r=2$, we have
\begin{align*}
&x_{n,3}=x_{n,2}-x_{n-1,1}\overset{\eqref{DCT22b}}
=\sum_{k=0}^{n-2} C_{k}x_{n-k-1,1}-x_{n-1,1}
\overset{C_0=1}=\sum_{k=1}^{n-2} C_{k} x_{n-k-1,1}\\ \overset{h:=k-1}=&\sum_{h=0}^{n-3} C_{h+1}x_{n-h-2,1} \overset{\eqref{DCT23g6}}=\sum_{h=0}^{n-3} C_{2}(h,h+1)x_{n-h-2,1}
\end{align*}  \end{proof}

\begin{proposition}\label{DCT23} The system \eqref{DCT25a} has a solution in the form of \eqref{DCT25b}.
\end{proposition}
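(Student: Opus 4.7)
My plan is to verify that \eqref{DCT25b} is a genuine solution of the Catalan's triangle system \eqref{DCT25a}. Since Proposition \ref{DCT25d} guarantees at most one solution for any prescribed $\{x_{n,1}\}_{n\in\mathbb{N}^*}$, and the recurrence \eqref{DCT25a} itself constructs one, the task reduces to showing that this unique solution coincides with the closed form \eqref{DCT25b} for every $n\ge 2$ and $2\le m\le n$. I will proceed by induction on $m$, the engine being the algebraic identity \eqref{DCT23g9} of Lemma \ref{DCT-le01}.

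The base cases $m=2$ and $m=3$ are already available in the paper: Proposition \ref{DCT22} supplies the formula at level $m=2$, and Corollary \ref{DCT25f} at level $m=3$. Meanwhile the diagonal entries $x_{n,n}=x_{1,1}$ are pinned down by Proposition \ref{DCT25c}, in agreement with \eqref{DCT25b} evaluated at $m=n$. Together these suffice to anchor the induction.

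For the inductive step, assume \eqref{DCT25b} holds at levels $m$ and $m-1$ with $m\ge 3$, and derive it at level $m+1$. The relation \eqref{DCT25e1} with $r:=m$ gives $x_{n,m+1}=x_{n,m}-x_{n-1,m-1}$. Substituting the two inductive hypotheses, both sums run over $h\in\{0,\ldots,n-m\}$ and share the factor $x_{n-m-h+1,1}$, yielding the combined coefficient $C_{m-1}(h,h+m-2)-C_{m-2}(h,h+m-3)$. For $h=0$ this collapses to $1-1=0$ by \eqref{DCT23d0}; for $h\ge 1$ the identity \eqref{DCT23g9} with $k:=h-1$ rewrites the coefficient as $C_m(h-1,h+m-2)$. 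Reindexing $h':=h-1$ then converts the sum into $\sum_{h'=0}^{n-m-1}C_m(h',h'+m-1)x_{n-m-h',1}$, which is exactly \eqref{DCT25b} at level $m+1$.

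The principal technical obstacle is the careful alignment between \eqref{DCT23g9} and the difference $C_{m-1}(h,h+m-2)-C_{m-2}(h,h+m-3)$: one must confirm that the substitution $k:=h-1$ hits the required coefficients on the nose, that the $h=0$ term vanishes via \eqref{DCT23d0}, and that the hypothesis $m\ge 3$ demanded by \eqref{DCT23g9} is precisely what the two base cases $m=2,3$ let us assume. A brief sanity check at the diagonal $n=m+1$ (where $x_{m+1,m+1}$ should equal $x_{1,1}$) can be carried out using \eqref{DCT23d0a}; beyond this, the remaining verifications are pure index bookkeeping.
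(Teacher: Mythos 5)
Your argument is correct and is essentially the paper's own proof: the same base cases (Proposition \ref{DCT25c}, Proposition \ref{DCT22}, Corollary \ref{DCT25f}), the same difference relation \eqref{DCT25e1} giving $x_{n,m+1}=x_{n,m}-x_{n-1,m-1}$, and the same key identity \eqref{DCT23g9} with the shift $k:=h-1$ after the $h=0$ term cancels via \eqref{DCT23d0}. The only difference is organizational — you run a single induction on $m$ with the statement quantified over all $n\ge m$, whereas the paper nests an induction on $m$ inside an induction on $n$ — and this does not change the substance of the verification.
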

\begin{proof} The proof is carried out using an induction argument on both $n$ and $m$.

As a straightforward consequence of Propositions \ref{DCT25c}, Propositions \ref{DCT22} and Corollary \ref{DCT25f}, we assert that, the expression $x_{n,m}$ in the form \eqref{DCT25b} constitutes solution to the system \eqref{DCT25a} for any $(n,m)\in\{(2,2), (3,3), (3,2)\}$, i.e., for any $n\in\{2,3\}$ and $2\le m\le n$.

Suppose that our statement is verified for $n=N\ge 3$ and all $m\in\{2,\ldots, N\}$, in other words, through solving the system \eqref{DCT25a}, we have obtained:
\begin{align}\label{DCT23g2}
x_{N,m}=\sum_{h=0}^{N-m} C_{m-1}(h,h+m-2)x_{N+1-h-m,1}, \quad \text{for any } m\in\{2,\ldots, N\}
\end{align}
Now, we aim to demonstrate the assertion for $n=N+1$, i.e., we want to show, through solving the system \eqref{DCT25a}, that
\begin{align}\label{DCT23g3}
x_{N+1,m}=\sum_{h=0}^{N+1-m}C_{m-1}(h,h+m-2)x_{N+2-h-m, 1}, \quad \text{for any } m\in\{2,\ldots, N+1\}
\end{align}

It follows from Proposition \ref{DCT22} and Corollary \ref{DCT25f} that the equality in \eqref{DCT23g3} holds when $m\in\{2,3\}$.

Assuming the validity of the equality in \eqref{DCT23g3} for $m\le r\in\{3,\ldots,N\}$, we can now prove its validity for $m=r+1$ as follows:
\begin{align*}
&x_{N+1,r+1}\overset{\eqref{DCT25e1}}=x_{N+1,r}-x_{N,r-1}\notag\\
=&\sum_{k=0}^{N+1-r}C_{r-1}(k,k+r-2)x_{N+2-k-r,1}\\
&-\sum_{h=0}^{N-(r-1)}C_{r-2}(h,h+r-3)
x_{N+1-h-(r-1),1}\\
=&\sum_{h=0}^{N+1-r} \Big(C_{r-1}(h,h+r-2)- C_{r-2}(h,h+r-3)\Big) x_{N+2-h-r,1}\notag\\
\overset{\eqref{DCT23d0}}=&\sum_{h=1}^{N+1-r} \Big(C_{r-1} (h,h+r-2)- C_{r-2}(h,h+r-3)\Big) x_{N+2-h-r,1}\notag\\
\overset{k:=h-1}=&\sum_{k=0}^{N+1-(r+1)} \Big(C_{r-1} (k+1,k+r-1)- C_{r-2}(k+1,k+r-2)\Big) x_{N+2-k-(r+1),1}
\end{align*}
which, by virtue of the formula \eqref{DCT23g9}, is nothing else than the right hand side of \eqref{DCT23g3} for $m=r+1$. \end{proof} 

As a consequence of Theorem \ref{DCT25}, it can be established that,, for any $n\ge2$ and $m\in \{2,\ldots, n\}$, the following holds:
\begin{align}\label{DCT27}
&\sum_{p=0}^{n-m}C_{m}(p,p+m-1)x_{n-m-p+1,1}
\overset{\eqref{DCT25b}}=x_{n+1,m+1}\overset{\eqref{DCT25a}}=\sum_{k=m}^{n}x_{n,k}\notag\\
\overset{\eqref{DCT25b}}=& \sum_{k=m}^{n}\sum_{h=0} ^{n-k}C_{k-1}(h,h+k-2)x_{n-k-h+1,1} \notag\\
\overset{j:=k-m}=& \sum_{j=0}^{n-m}\sum_{h=0}^{n-m-j}
C_{j+m-1}(h,h+j+m-2)x_{n-m-j-h+1,1}\notag\\
\overset{p:=h+j}=& \sum_{j=0}^{n-m}\sum_{p=j}^{n-m}
C_{j+m-1}(p-j,p+m-2)x_{n-m-p+1,1}\notag\\
=&\sum_{p=0}^{n-m}x_{n-m-p+1,1}  \sum_{j=0}^{p}
C_{j+m-1}(p-j,p+m-2)
\end{align}
Therefore, we have obtained the following interesting combinatorial result:
\begin{align}\label{DCT27a}\sum_{j=0}^{k} C_{j+m-1}(k-j,k+m-2) =C_{m}(k,k+m-1),\
\text{for any } m\ge2 \text{ and } k\in \mathbb{N}
\end{align}

\section{ $(q,2)-$Fock space, its involved pair partitions and connection to the system \eqref{DCT25a}} \label{DCTsec(q,2)}

It is well--known that the sets $PP(2n)$'s and $NCPP(2n)$'s, or more generally, $PP_k(2n)$'s and $NCPP_k(2n)$'s, play a crucial role in various fields. These fields include the study of moments of certain probability distributions, exemplified by Isserlis' Theorem, as well as the normally ordered form of products involving creation--annihilation operators defined on a Fock space over a given (pre--)Hilbert space, such as Wick's Theorem, among other.

In this section, we introduce a specific Fock space, known as the $(q,2)-$Fock space, which is a special case of the $(q,m)-$Fock space introduced in \cite{YGLu2022a}. Within the context of the $(q,2)-$Fock space, we utilize a particular set of pair partitions, denoted as $\{\mathcal{P}_{n,k}\}_{n\in \mathbb{N}^*,\,1\le k\le n}$. This set of pair partitions plays a role similar to $\{NCPP_k(2n)\} _{n\in \mathbb{N}^*,\,1\le k\le n}$ for the full Fock space (for reference, see, e.g., \cite{BratteliRobinson2}, \cite{NicaSpe2006}, \cite{KRPbook1992}, \cite{ReedSimon}
and related literature) and $\{PP_k(2n)\}_{n\in \mathbb{N}^*,\,1\le k\le n}$ for the symmetrical Fock space, or more general, the $q$--Fock space with $q\in[-1,1]\setminus \{0\}$ (see, e.g., \cite{BoKumSpe97}, \cite{Bo-Spe91}). Moreover, we have the following relationship:
\begin{equation}\label{DCT05f}
NCPP(2n)\subsetneqq{\mathcal P}_{n}:=\sum_{k=1}^n
{\mathcal P}_{n,k}\subsetneqq PP(2n),\quad \text{for any } n\ge3
\end{equation}
It's worth noting that the facts $NCPP(2)=PP(2)$, $\left\vert NCPP(4) \right \vert =2$, and $\left\vert PP(4)\right\vert$ $=3$ demonstrate the non--existence of such sets ${\mathcal P}_{n}$ satisfying $NCPP(2n) \subsetneqq{\mathcal P}_{n}$ $\subsetneqq PP(2n)$ for $n\in\{1,2\}$.

The specific construction of ${\mathcal P}_{n,k}$'s and ${\mathcal P}_{n}$'s will be provided elsewhere. In this section, our primary goal is to give, by utilizing ${\mathcal P}_{n,k}$'s and ${\mathcal P}_{n}$'s, a concrete example of $\{x_{n,m}\}_{n\in\mathbb{N}^*, \,1 \le m\le n}$ that satisfies the system \eqref{DCT25a} with a boundary condition distinct from \eqref{DCT-CTSbou1}.         

\subsection{The usual Fock space and pair partitions}
\label{DCTsec(q,m)00}
Recalling that, let $\mathcal{H}$ be a (pre--)Hilbert space over $\mathbb{C}$ with the scalar product $\langle \cdot,\cdot\rangle$, one defines the full (or free) Fock space over $\mathcal{H}$ as $\Gamma_{free} (\mathcal{H}):= \bigoplus_{n=0}^\infty \mathcal{H} ^{\otimes n}$. Hereinafter, $\mathcal{H}^{\otimes 0}:=\mathbb{C}$ and $\mathcal{H}^{\otimes n}:=$the $n-$fold tensor product of $\mathcal{H}$ for any $n\in\mathbb{N}^*$.

On the full Fock space $\Gamma_{free}(\mathcal{H})$, for any $f\in \mathcal{H}$, one defines the {\it creation operator with the test function $f$}, denoted as $b^+_0(f)$, by the linearity and
\begin{align}\label{DCT28}
b^+_0(f)\Phi&:=f\notag\\
b^+_0(f)(f_1\otimes \ldots\otimes f_n)&:=f\otimes f_1\otimes \ldots\otimes f_n,\ \text{for any } n\ge 1\text{ and } f_1,\ldots,f_n\in\mathcal{H}
\end{align}

The creation operator $b^+_0(f)$ is bounded (in fact $b^+_0(f)=\Vert f\Vert$), and thus, its conjugate $b_0(f):=\big(b^+_0(f)\big)^*$ is well--defined. The action of and $b_0(f)$ is surely given by:
\begin{align}\label{DCT28a}
b_0(f)\Phi&=0\notag\\
b_0(f)(f_1\otimes \ldots\otimes f_n)&= \langle f, f_1\rangle f_2\otimes \ldots\otimes f_n,\ \text{for any } n\ge 1\text{ and } f_1,\ldots,f_n\in\mathcal{H}
\end{align}

By replacing the tensor product ``$\otimes$'' with the {\it symmetrical tensor product} ``$\circ$'', one defines the symmetrical (or Bosonic) Fock space over $\mathcal{H}$ as $\Gamma_{symm}(\mathcal{H}):= \bigoplus_{n=0}^\infty \mathcal{H}^{\circ n}$. On the symmetrical Fock space $\Gamma_{symm}(\mathcal{H})$, the creation operator with the test function $f\in \mathcal{H}$, denoted as $b_1^+(f)$, is defined in similar way as that $b_0^+(f)$ by linearity and the following action:
\begin{align}\label{DCT28b}
b^+_1(f)\Phi&:=f\notag\\
b^+_1(f)(f_1\circ \ldots\circ f_n)&:=f\circ f_1\circ \ldots\circ f_n,\quad\text{for any } n\ge 1\text{ and } f_1,\ldots,f_n\in\mathcal{H}
\end{align}
It is important to note that this operator is not bounded on $\Gamma_{symm} (\mathcal{H})$ (it is a bounded operator from $\mathcal{H}^{\circ n}$ to $\mathcal{H}^{\circ (n+1)}$ for each $n$), while it has a well--defined adjoint, denoted as $b_1(f):= \big(b^+_1(f)\big)^*$. This adjoint is called the annihilation operator with the test function $f$.

In what follows, one denotes, for any $n\in\mathbb{N}^*$,
\begin{align}\label{DCT02c00}
&\{-1,1\}^{n}:=\big\{\text{functions on }\{1,\ldots,n\} \text{ and valued in }\{-1,1\} \big\}\notag\\
&\{-1,1\}^{2n}_+:=\big\{\varepsilon\in \{-1,1\}^{2n} :\sum_{h=1}^{2n}\varepsilon(h) =0,\, \sum_{h=p}^{2n}\varepsilon(h)\ge 0,\ \text{for any } p\in \{1,\ldots,2n\}\big\}  \notag\\
&\{-1,1\}^{2n}_-:=\{-1,1\}^{2n}\setminus \{-1,1\}^{2n}_+
\end{align}

\begin{remark}\label{DCT-rem4-0} Clearly, in the definition of $\{-1,1\}^{2n}_+$, the condition $\sum_{h=p}^{2n}\varepsilon(h)\ge 0$ for any $p\in \{1,\ldots,2n\}$ can be replaced by $\sum_{h=1}^{k} \varepsilon(h)\le 0$ for any $k\in \{1,\ldots,2n\}$ since $\sum_{h=1}^{2n} \varepsilon(h)=0$.
\end{remark}

For any $n\in\mathbb{N}^*$, one introduces a function $\tau: PP(2n)\longmapsto \{-1,1\}^{2n}$ such that, for any $\theta:=\{(l_h,r_h)\}_{h=1}^n\in PP(2n)$, $\varepsilon:=\tau(\theta)$ is an element of $PP(2n)$ defined as follows:
\begin{align}\label{DCT02c03}
\varepsilon(j):=\tau(\theta)(j):=\begin{cases}
1,&\text{ if }j\in \{r_1,r_2,\ldots, r_n\}\\
-1,&\text{ if }j\in \{l_1,l_2,\ldots, l_n\}\\
\end{cases},\quad \text{for any } j\in\{1,\ldots,n\}
\end{align}
It is obvious that $\tau$ maps $PP(2n)$ into $\{-1,1\}^{2n}_+$ due to the following facts:

$\bullet$ $\sum_{h=1}^{2n}\varepsilon(h)=\big\vert\big\{r_h:
h\in\{1, \ldots,n\}\big\}\big\vert-\big\vert
\big\{l_h:h\in\{1, \ldots,n\}\big\}\big\vert=n-n=0$;

$\bullet$ $\sum_{h=p}^{2n}\varepsilon(h)\ge 0$ for any $p\in \{1,2,\ldots,2n\}$, since $r_j>l_j$ for any $j\in\{1,\ldots,n\}$.

Moreover, for any $n\in\mathbb{N}^*$ and $\varepsilon\in \{-1,1\}^{2n}_+$, as demonstrated in \cite{Ac-Lu96} and \cite{Ac-Lu2022a},

$\bullet$ the set $\tau^{-1}(\varepsilon)\cap NCPP(2n)$ has a cardinality 1, i.e. $\tau$ induces a bijection between $NCPP(2n)$ and $\{-1,1\}^{2n}_+$;

$\bullet$ the set
\begin{align}\label{DCT02c04}
&PP(2n,\varepsilon):=\tau^{-1}(\varepsilon)\notag\\
:=&\big\{\{(l_h,r_h)\}_{h=1}^n\in PP(2n): \varepsilon(l_h)=-1,\, \varepsilon(r_h)=1, \text{ for any } h=1,\ldots,n\big\}
\end{align}
has a cardinality of $\prod_{h=1}^n (2h-l_h)$.

In the following, for any $n\in\mathbb{N}^*$ and for any $\theta:=\{(l_h,r_h)\}_{h=1}^n\in NCPP(2n)$, one refers to $\tau(\theta)\in \{-1,1\}^{2n}_+$ as the {\bf counterpart} of $\theta$; the unique element of  $\tau^{-1}(\varepsilon)\cap NCPP(2n)$ is called the {\bf counterpart} of $\varepsilon$. Which will be denoted as $\{(l^\varepsilon_h, r^\varepsilon_h)\}_{h=1}^n$ unless otherwise specified.

\begin{remark}\label{DCT-rem4-1} For any
$n\in\mathbb{N}^*$ and $\varepsilon\in\{-1,1\}^{2n}_+$, it is important to note that all elements in $PP(2n,\varepsilon)$ must have the same left indices. Specifically, they consists of $n$ elements of the set $\varepsilon^{-1}(\{-1\})$ in increasing order.
\end{remark}

By denoting
\begin{align}\label{DCT02c07}
b_q^\varepsilon(f):=\begin{cases}b_q^+(f),&\text{ if } \varepsilon=1\\ b_q(f),&\text{ if } \varepsilon=-1
\end{cases},\quad \text{for any } q\in\{0,1\}\text{ and }f\in\mathcal{H}
\end{align}
it is well--known that, with respect to the vacuum state, the expectation of an arbitrary product of the creation--annihilation operators (i.e., the {\it vacuum mixed--moments}) is determined by $PP(2n)$'s and $NCPP(2n)$'s. By denoting  $\Phi:=1\oplus0\oplus0\oplus\ldots$ as the {\it vacuum vector} for both $\Gamma_{free}(\mathcal{H})$ and $\Gamma_{symm}(\mathcal{H})$, for any $m\in\mathbb{N} ^*$ and $\{f_1,\ldots, f_m\}\subset\mathcal{H} $,
\begin{align}\label{DCT02c02}
&\big\langle \Phi,b^{\varepsilon(1)}_q(f_1)\ldots b^{\varepsilon(m)}_q(f_m)\Phi\big\rangle\notag\\
=&\begin{cases} \sum_{\{(l_h,r_h)\}_{h=1}^n\in PP(2n,\varepsilon)}
\prod_{h=1}^n\langle f_{l_h},f_{r_h}\rangle,&\text{ if } m=2n, \varepsilon\in  \{-1,1\}^{2n}_+\text{ and }q=1\\ \prod_{h=1}^n\langle f_{l^\varepsilon_h}, f_{r^\varepsilon_h}\rangle,&     \text{ if } m=2n, \varepsilon\in  \{-1,1\}^{2n}_+ \text{ and }q=0\\
 0,&\text{ otherwise}\\
\end{cases}
\end{align}
Where, it is clear that the case ``otherwise'' means either $m$ is odd or $m=2n,\,\varepsilon\in  \{-1,1\}^{2n}_-$. In particular, for any $f\in\mathcal{H} $,
\begin{align}\label{DCT02c01}
\big\langle \Phi,\big(b_q(f)+b^+_q(f)\big)^m\Phi \big\rangle  =&\sum_{\varepsilon\in\{-1,1\}^m}\big\langle \Phi,b^{\varepsilon(1)}_q(f)\ldots b^{\varepsilon(m)}_q(f) \Phi\big\rangle   \notag\\
=&\Vert f\Vert^{2n}\cdot\begin{cases} \big\vert PP(2n)\big\vert,&\text{ if } m=2n \text{ and }q=1\\ \big\vert NCPP(2n)\big\vert,& \text{ if }m=2n \text{ and }q=0\\
0,&\text{ if } m\text{ is odd}
\end{cases}
\end{align}

It is interesting to notice that, for any $k\in\{1,\ldots,n\}$, the application $\tau$ given in \eqref{DCT02c03} induces a bijection between $NCPP_k(2n)$ and the set
\begin{align}\label{DCT02c05}
&\{-1,1\}^{2n,k}_+:=\big\{\varepsilon\in  \{-1,1\}^{2n}_+ :\, \max\varepsilon^{-1}(\{-1\})=2n-k\big\}\notag\\
=&\big\{\varepsilon\in\{-1,1\}^{2n}_+:\,\varepsilon (2n-k) =-1,\,\varepsilon(j)=1\ \ \text{for any } j>2n-k \big\}
\end{align}
Therefore,
\begin{align}\label{DCT02c06}
&\sum_{\varepsilon\in\{-1,1\}^{2n,k}}\big\langle \Phi, b^{\varepsilon(1)}_0(f)\ldots b^{\varepsilon(2n)}_0(f) \Phi\big\rangle  \notag\\
=&\sum_{\varepsilon\in\{-1,1\}^{2n-k-1}}\big\langle \Phi,b^{\varepsilon(1)}_0(f)\ldots b^{\varepsilon (2n- k-1)}_0(f)b_0(f)\big(b^+_0(f)\big)^k\Phi\big\rangle
\notag\\
=&\Vert f\Vert^{2n}\cdot\big\vert NCPP_k(2n)\big\vert
\overset{\eqref{DCT05a}}=\Vert f\Vert^{2n}\cdot C(n-1,n-k)
\end{align}

\subsection{The $(q,2)-$Fock space and some of its  elementary properties}\label{DCTsec(q,m)01}\medskip

Let $\mathcal{H}$ be a Hilbert space with the scalar product $\langle \cdot,\cdot \rangle$  of dimension greater than or equal to 2 (we maintain this convention throughout unless otherwise stated), let $\mathcal{H}^{\otimes n}$ be its $n-$fold tensor product. For any $q\in[-1,1]$, one introduces the following:

$\bullet$ $\lambda_1:={\bf 1}_{\mathcal{H}}$, i.e., the identity operator on $\mathcal{H}$;

$\bullet$ $\lambda_2$ is the linear operator on $\mathcal{H}^{\otimes (n+2)}$ defined as
\begin{equation}\label{DCT05f0}
\lambda_2(f\otimes g):=f\otimes g+q g\otimes f\,,\quad \text{for any } f,g\in\mathcal{H}
\end{equation}

$\bullet$ For any $n\in\mathbb{N}^*$, $\lambda_{n+2}$ is the linear operator on $\mathcal{H}^{\otimes (n+2)}$defined as
\begin{equation}\label{DCT05f1}
\lambda_{n+2}:={\bf 1}_{\mathcal{H}}^{\otimes n}\otimes \lambda_2
\end{equation}
The easily verified positivity of $\lambda_n$'s implies that the completion of the quotient space \[\big(\mathcal{H}^{\otimes n}, \langle \cdot,\lambda_{n}\cdot \rangle_{\otimes n}\big)/Ker\langle \cdot,\lambda_{n}\cdot \rangle_{\otimes n}\]
denoted as $\mathcal{H}_n$, is a Hilbert space, where $\langle \cdot,\cdot \rangle_{\otimes n}$ is the usual tensor scalar product. By denoting $\langle \cdot,\cdot \rangle_{n}$ as the scalar product of $\mathcal{H}_n$, one has $\langle \cdot,\cdot \rangle_{1}:=\langle \cdot,\cdot \rangle $, and for any $n\ge2$,
\begin{equation}\label{DCT05f2}
\langle F,G \rangle_{n}:=\langle F,\lambda_nG \rangle_{\otimes n} \,,\quad\text{for any } F,G\in \mathcal{H}^{\otimes n}
\end{equation}
or equivalently for any $n\in \mathbb{N}$,
\begin{align}
\langle F,G\otimes f\otimes g \rangle_{n}:=&\langle F,G\otimes f\otimes g\rangle_{\otimes n}+ q\langle F,G\otimes g\otimes f\rangle_{\otimes n}\notag\\
&\text{for any } F\in \mathcal{H}^{\otimes (n+2)},\, G\in \mathcal{H}^{\otimes n}\text{ and }f,g\in \mathcal{H}\label{DCT05f3}
\end{align}

\begin{definition}\label{(q,2)-Fock} Let $\mathcal{H}$ be a Hilbert space and let $\mathcal{H}_n$ be the Hilbert space defined as above for any $n\in\mathbb{N}^*$. We denote $\mathcal{H}_{0} := \mathbb{C}$ and define the Hilbert space $\Gamma_{q,2} (\mathcal{H}):=\bigoplus_{n=0}\mathcal{H}_n $. This Hilbert space is referred to as the {\bf (q,2)--Fock space} over $\mathcal{H}$; the vector $\Phi:=1\oplus0 \oplus0 \oplus\ldots$ is termed as the {\bf vacuum vector} of $\Gamma_{q,2}(\mathcal {H})$; and for any $n\in\mathbb{N}^*$, $\mathcal{H}_n$ is called the {\bf $n-$particle space}.
\end{definition}

\begin{remark}\label{consist}1) If there is no confusion, we will simply use $\langle \cdot,\cdot \rangle$ and $\Vert\cdot\Vert$ to denote the scalar product and the induced norm on both $\Gamma_{q,2}(\mathcal{H})$ and $\mathcal{H}_n$'s.

2) It is straightforward to see the {\bf consistency} of $\langle \cdot,\cdot \rangle_{n}$'s: for any $0\ne f\in\mathcal{H}$ and  $n\in\mathbb{N}^*$,
\begin{equation}\label{DCT05f4}
\Vert f\otimes F\Vert=0\text{ whenever } F\in\mathcal{H}_{n} \text{ verifying } \Vert F\Vert=0
\end{equation}
\end{remark}

\begin{definition}\label{creaOn(q,2)} For any $f\in \mathcal{H}$, the {\bf creation operator} (with the test function $f$), denoted as $A^+(f)$, is defined as a {\bf linear} operator on $\Gamma_{q,2}(\mathcal{H})$ such that
\begin{equation}\label{creaOn(q,2)a}
A^+(f)\Phi:=f\,,\quad A^+(f)F:=f\otimes F,\quad \text{for any } n\in\mathbb{N}^*\text{ and }F\in\mathcal{H}_n
\end{equation}
\end{definition}

\begin{remark}\label{DCTrem3-2}The consistency  mentioned in Remark \ref{consist} ensures that the operator $A^+(f)$ defined above is a well--defined {\bf linear} operator since it brings the zero of $\mathcal{H}_n$ to the zero of $\mathcal{H}_{n+1}$.
\end{remark}

\begin{proposition}\label{DCT05g}Let $\mathcal{H}$ be a Hilbert space and $q\in[-1,1]$. For any $f\in\mathcal{H}$, the $(q,2)-$creation operator $A^+(f)$ is bounded with the norm given by:
\begin{align}
\Vert A^+(f)\Vert =\Vert f\Vert\cdot\begin{cases} \sqrt{1+q},&\text{ if }q\in[0,1];\\ 1,&\text{ if }q\in[-1,0)  \end{cases}\label{DCT05g0}
\end{align}
Moreover, the {\bf $(q,2)-$annihilation operator} with the test function $f\in \mathcal{H}$, defined as $A(f):=\big(A^+(f)\big)^*$, satisfies the following:

1) $A(f)\Phi=0$ and for any $n\in\mathbb{N}^*$, $\{g_1,\ldots, g_n\}\subset\mathcal{H}$,
\begin{align}\label{DCT05g1}
&A(f)(g_1\otimes\ldots\otimes g_n)
=A(f)A^+(g_1)\ldots A^+(g_1)\Phi\notag\\
=&\begin{cases} \langle f, g_1\rangle\Phi, &\text{ if }n=1;\\
\langle f, g_1\rangle g_2+q\langle f, g_2\rangle g_1,&\text{ if }n=2;\\     \langle f, g_1\rangle g_2\otimes\ldots\otimes g_n,&\text{ if }n> 2  \end{cases}
\end{align}

2) $\Vert A(f)\big\vert_{\mathcal{H}_{1}}\Vert =\Vert A^+(f)\big\vert_{\mathcal{H}_{0}}\Vert =\Vert f\Vert$ and for any $n\in\mathbb{N}^*$,
\begin{align}\label{DCT05g2}
&\Vert A(f)\big\vert_{\mathcal{H}_{n+1}}\Vert =\Vert A^+(f)\big\vert_{\mathcal{H}_n}\Vert\notag\\ &
\Vert A(f)A^+(f)\big\vert_{\mathcal{H}_n}\Vert  =\Vert A^+(f)A(f)\big\vert_{\mathcal{H}_n}\Vert =\Vert A(f)\big\vert_{\mathcal{H}_n}\Vert ^2
\end{align}
Consequently
\begin{align}\label{DCT05g3}
\Vert A(f)\Vert =\Vert A^+(f)\Vert\,;\quad \Vert A(f)A^+(f)\Vert  =\Vert A^+(f)A(f)\Vert =\Vert A(f)\Vert^2
\end{align}
\end{proposition}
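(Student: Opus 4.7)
The overall plan is to compute the norm of the creation operator level by level on the $n$-particle subspaces $\mathcal{H}_n$, then assemble the global norm; the annihilation operator is reconstructed by duality $\langle A(f)F, G\rangle = \langle F, A^+(f)G\rangle$; finally the identities in \eqref{DCT05g2} and \eqref{DCT05g3} follow from standard operator-theoretic facts (the adjoint is norm preserving, and the $C^*$-identity $\|T^*T\|=\|TT^*\|=\|T\|^2$).

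For the norm of $A^+(f)$, the key observation is that for $n\geq 2$ the operator $\lambda_{n+1}$ defined in \eqref{DCT05f1} acts only on the last two tensor factors, hence the prepended $f$ is untouched. Concretely, one checks on simple tensors that $\lambda_{n+1}(f\otimes F) = f\otimes \lambda_n(F)$ for any $F\in\mathcal{H}^{\otimes n}$ with $n\geq 2$, so
\begin{align*}
\|f\otimes F\|_{n+1}^2 = \langle f\otimes F, f\otimes \lambda_n F\rangle_{\otimes(n+1)} = \|f\|^2 \langle F, \lambda_n F\rangle_{\otimes n} = \|f\|^2 \|F\|_n^2,
\end{align*}
giving $\|A^+(f)|_{\mathcal{H}_n}\| = \|f\|$ for all $n\geq 2$; the same value trivially holds for $n=0$. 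The genuine content is at $n=1$: from \eqref{DCT05f0} one gets
\[
\|f\otimes g\|_2^2 = \|f\|^2\|g\|^2 + q|\langle f, g\rangle|^2,
\]
whose supremum over unit vectors $g$ splits by the sign of $q$. For $q\in[0,1]$ the Cauchy--Schwarz saturating choice $g = f/\|f\|$ gives $(1+q)\|f\|^2$; for $q\in[-1,0)$ the correction is non-positive and the supremum $\|f\|^2$ is attained on any unit $g\perp f$, available because $\dim\mathcal{H}\geq 2$. Taking the supremum over all levels then yields \eqref{DCT05g0}.

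The action of $A(f)$ is determined by testing $\langle A(f)F, G\rangle_{n-1} = \langle F, f\otimes G\rangle_n$ against arbitrary $G\in\mathcal{H}_{n-1}$. For $F\in\mathcal{H}_1$ this is immediate and produces $A(f)g = \langle f,g\rangle\Phi$. For $F = g_1\otimes g_2\in\mathcal{H}_2$, expanding via \eqref{DCT05f0} yields
\[
\langle g_1\otimes g_2, f\otimes h\rangle_2 = \langle g_1, f\rangle\langle g_2, h\rangle + q\langle g_2, f\rangle\langle g_1, h\rangle,
\]
which identifies $A(f)(g_1\otimes g_2) = \langle f, g_1\rangle g_2 + q\langle f, g_2\rangle g_1$. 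For $F = g_1\otimes\cdots\otimes g_n$ with $n\geq 3$, the factorization $\lambda_n(f \otimes G) = f\otimes \lambda_{n-1}(G)$ again leaves the prepended $f$ outside the deformed region, reducing the calculation to the standard full-Fock one and giving the last line of \eqref{DCT05g1}.

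The identities \eqref{DCT05g2} and \eqref{DCT05g3} then follow routinely: the first is $\|T^*\| = \|T\|$ applied to $T = A^+(f)|_{\mathcal{H}_n}: \mathcal{H}_n\to\mathcal{H}_{n+1}$; the remaining pair are the $C^*$-identities $\|T^*T\| = \|TT^*\| = \|T\|^2$ applied to the same $T$; the global identity $\|A(f)\| = \|A^+(f)\|$ comes from taking the supremum over $n$. The main subtlety throughout is the $n=1$ case, where the deformation genuinely acts, forcing the split into $q\geq 0$ and $q<0$ and the use of the standing hypothesis $\dim\mathcal{H}\geq 2$ to realize the supremum when $q<0$.
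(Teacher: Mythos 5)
Your proof is correct and rests on the same structural observation as the paper's: the deformation $\lambda_{n}$ acts only on the last two tensor factors, so prepending $f$ commutes with it, $\lambda_{n+1}(f\otimes F)=f\otimes\lambda_n F$ for $n\ge 2$. The paper packages this as the identification \eqref{DCT05g4} of $A^+(f)$ with the $q$-deformed creation operator on $\mathcal{H}_0\oplus\mathcal{H}_1$ direct-summed with the free creation operator on the higher levels, and then cites \cite{Bo-Spe91} both for the level norms \eqref{DCT05g5} and for the action of the adjoint; you instead carry out those computations from the definitions (the optimization of $\Vert f\Vert^2\Vert g\Vert^2+q\vert\langle f,g\rangle\vert^2$ over unit vectors $g$, with the split on the sign of $q$ and the use of $\dim\mathcal{H}\ge 2$ when $q<0$, and the duality computation of $A(f)$ level by level). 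So your argument is self-contained where the paper delegates to the literature, but it is not a different route.

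One caveat, which applies equally to the paper's ``Affirmation 2) is trivial'': on a fixed level, $A(f)A^+(f)\big\vert_{\mathcal{H}_n}=T^*T$ with $T=A^+(f)\big\vert_{\mathcal{H}_n}$, whereas $A^+(f)A(f)\big\vert_{\mathcal{H}_n}=SS^*$ with $S=A^+(f)\big\vert_{\mathcal{H}_{n-1}}$. These are not the same operator, contrary to your phrase ``applied to the same $T$''; the $C^*$-identity gives $\Vert A^+(f)\vert_{\mathcal{H}_n}\Vert^2$ for the first product and $\Vert A(f)\vert_{\mathcal{H}_n}\Vert^2$ for the second, and these coincide only when the level norms at $n-1$ and $n$ agree --- which fails at the deformed levels $n\in\{1,2\}$ when $q\ne 0$. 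The global identities \eqref{DCT05g3} are unaffected, since there one takes the supremum over all levels; but you (and the paper) should not present the full chain of equalities in \eqref{DCT05g2} as an instance of the $C^*$-identity for a single operator.
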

\begin{proof}The boundedness of the $(q,2)-$creation operator $A^+(f)$ is straightforward since it can be expressed as:
\begin{equation}\label{DCT05g4}
A^+(f)=b^+_q(f)\big\vert_{\mathcal{H}_0\oplus\mathcal{H}_1}\oplus b^+_0(f)\big\vert_{\mathcal{H}_{2}\oplus \mathcal{H}_{3}\oplus\ldots}
\end{equation}
Hereinafter, $b^+_q(f)$ is the creation operator with the test function $f$ on the $q-$Fock space for any $q\in[-1,1]$. Consequently, its adjoint exists and satisfies the equalities mentioned in the affirmation 1), as demonstrated in, e.g., \cite{Bo-Spe91}. Affirmation 2) is trivial since each $\mathcal{H}_n$ is a Hilbert space. Now let's prove \eqref{DCT05g0}.

It is well--known (as seen in \cite{Bo-Spe91} for instance) that
\begin{align}\label{DCT05g5}
\Vert A^+(f)\big\vert_{\mathcal{H}_n}\Vert =\Vert f\Vert\cdot\begin{cases} \sqrt{1+q},&\text{ if }n=1;\\ 1,&\text{ if }n\ne 1  \end{cases}
\end{align}
and this leads to
\begin{align*}
\Vert A^+(f)\Vert =\Vert f\Vert\cdot\max\big\{1, \sqrt{1+q}\big\}= \Vert f\Vert\cdot \begin{cases} \sqrt{1+q},&\text{ if }q\in[0,1];\\ 1,&\text{ if }q\in[-1,0)  \end{cases}
\end{align*}
Thus the proof is successfully completed. \end{proof}

We will use the following analogue of \eqref{DCT02c07}:
\begin{align*}
A^\varepsilon(f):=\begin{cases}A^+(f),&\text{ if } \varepsilon=1\\ A(f),&\text{ if } \varepsilon=-1
\end{cases}\,,\qquad  \text{ for any }f\in\mathcal{H}
\end{align*}
In analogy of the formula \eqref{DCT02c02}, let's examine the vacuum expectations (mixed moments)
\begin{align}\label{DCT29a}
\big\langle \Phi,A^{\varepsilon(1)}(f_1)\ldots A^{\varepsilon(m)}(f_m)\Phi\big\rangle
\end{align}
for $m\in\mathbb{N}^*$, $\varepsilon\in  \{-1,1\}^{m}$ and $\{f_1,\ldots, f_m\}\subset\mathcal{H} $.

\begin{proposition}\label{DCT29z}On the $(q,2)-$Fock space, the vacuum expectation \eqref{DCT29a} equals to zero in either the following cases:

$\bullet$ $m$ is odd;

$\bullet$ $m=2n$ with $n\in\mathbb{N}^*$ and  $\varepsilon\in  \{-1,1\}^{2n}_-$ (see \eqref{DCT02c00} for the definition).
\end{proposition}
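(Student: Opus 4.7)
The proof hinges on the grading $\Gamma_{q,2}(\mathcal{H})=\bigoplus_{n\ge 0}\mathcal{H}_n$ together with the facts, recorded in Definition \ref{creaOn(q,2)} and Proposition \ref{DCT05g}, that $A^+(f)$ sends $\mathcal{H}_n$ into $\mathcal{H}_{n+1}$, that $A(f)$ sends $\mathcal{H}_n$ into $\mathcal{H}_{n-1}$ for $n\ge 1$, and that $A(f)\Phi=0$. My plan is to track, by descending induction on $p$, the particle subspace containing $A^{\varepsilon(p)}(f_p)\cdots A^{\varepsilon(m)}(f_m)\Phi$, and then to exploit the mutual orthogonality of the $\mathcal{H}_n$'s. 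To this end, introduce the backward partial sums $s_p:=\sum_{h=p}^{m}\varepsilon(h)$ for $1\le p\le m+1$, with the convention $s_{m+1}=0$.

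The inductive claim is the following dichotomy: either (a) $s_q\ge 0$ for every $q\in\{p,\ldots,m+1\}$, in which case $A^{\varepsilon(p)}(f_p)\cdots A^{\varepsilon(m)}(f_m)\Phi\in\mathcal{H}_{s_p}$, or (b) this vector equals $0$. The base $p=m+1$ is trivial since $\Phi\in\mathcal{H}_0$. For the step, if (b) already holds at $p+1$ it propagates to $p$; if (a) holds at $p+1$ the current state lies in $\mathcal{H}_{s_{p+1}}$, and applying $A^{\varepsilon(p)}(f_p)$ either shifts it into $\mathcal{H}_{s_p}$ (when $\varepsilon(p)=+1$, or when $\varepsilon(p)=-1$ and $s_{p+1}\ge 1$) or kills the vacuum (when $\varepsilon(p)=-1$ and $s_{p+1}=0$, forcing (b) at $p$).

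Taking $p=1$ and writing $V:=A^{\varepsilon(1)}(f_1)\cdots A^{\varepsilon(m)}(f_m)\Phi$, the quantity in \eqref{DCT29a} equals $\langle \Phi,V\rangle$; this vanishes either when $V=0$ (case (b)), or when $V\in\mathcal{H}_{s_1}$ with $s_1\ne 0$, by orthogonality of the summands in $\bigoplus_n\mathcal{H}_n$. For odd $m$, $s_1$ is a sum of an odd number of $\pm 1$'s, hence odd and therefore nonzero. For even $m=2n$ with $\varepsilon\in\{-1,1\}^{2n}_-$, the definition \eqref{DCT02c00} and Remark \ref{DCT-rem4-0} guarantee that at least one of the conditions $\sum_{h=1}^{2n}\varepsilon(h)=0$ and $\sum_{h=p}^{2n}\varepsilon(h)\ge 0$ fails; the first failure yields $s_1\ne 0$, the second produces a negative $s_p$ that triggers case (b). Either way the expectation vanishes. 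Since the argument is pure grading bookkeeping, I anticipate no real obstacle; the only delicate point is clean handling of the transition in which an annihilation meets the vacuum, which is exactly what splits case (a) from case (b) in the induction.
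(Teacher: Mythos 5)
Your proof is correct and follows essentially the same route as the paper: both arguments track the particle grading via the backward partial sums $\sum_{h=p}^{m}\varepsilon(h)$ and observe that a negative partial sum forces an annihilator to hit the vacuum. The only (harmless) difference is in dispatching the case $\sum_{h=1}^{m}\varepsilon(h)\ne 0$: you invoke orthogonality of the $n$--particle spaces $\mathcal{H}_n$ directly, while the paper applies the same vacuum-killing argument to the adjoint product $A^{-\varepsilon(m)}(f_m)\cdots A^{-\varepsilon(1)}(f_1)\Phi$ to force the total sum to vanish.
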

\begin{proof} Thanks to the following facts
\begin{align}\label{DCT29c}
\mathcal{H}_n\overset{A^+(f)}{\longmapsto} \mathcal{H}_{n+1}\overset{A(g)}{\longmapsto}\mathcal{H}_n, \quad A(g)\Phi=0,\quad\text{for any } n\in\mathbb{N},\ f,g\in\mathcal{H}
\end{align}
we know that for any $\varepsilon\in  \{-1,1\}^{m}$, the vector $A^{\varepsilon(1)}(f_1)\ldots A^{\varepsilon(m)}(f_m) \Phi$ differs from zero only if
\begin{align}\label{DCT29c1}
\sum_{k=p}^m \varepsilon(k)\ge0,\quad\text{for any } p\in \{1,\ldots, m\}
\end{align}
This condition is required because otherwise, an annihilation operator acts, sooner or later, on the vacuum vector. Similarly, the vector \[\big(A^{\varepsilon(1)}(f_1)\ldots A^{\varepsilon(m)}(f_m)\big)^*\Phi=A^{-\varepsilon(m)}(f_m)\ldots A^{-\varepsilon(1)}(f_1)\Phi\]
differs from zero only if
\begin{align}\label{DCT29c2}
\sum_{k=1}^p \varepsilon(k)\le0,\quad\text{for any } p\in \{1,\ldots, m\}
\end{align}
In particular, the vacuum mixed moment \eqref{DCT29a} differs from zero only if, by taking $p=1$ in \eqref{DCT29c1} and $p=m$ in \eqref{DCT29c2},
\begin{align}\label{DCT29c3}
\sum_{k=1}^m \varepsilon(k)=0
\end{align}
this clearly necessitates that $m$ must be even.

In the case of $m=2n$, \eqref{DCT29c3} becomes to respectively $\sum_{k=1}^{2n} \varepsilon(k)=0$ and \eqref{DCT29c1} become to $\sum_{k=p}^{2n} \varepsilon(k)\ge0$ for all $p\in \{1,\ldots, 2n\}$. Thus the vacuum mixed moment \eqref{DCT29a} differs from zero only if $\varepsilon\in  \{-1,1\}^{2n}_+$. \end{proof}

Thanks to Proposition \ref{DCT29z}, the analysis of mixed moments is simplified to consider
\begin{align}\label{DCT29b1}
\big\langle \Phi,A^{\varepsilon(1)}(f_1)\ldots A^{\varepsilon(2n)}(f_{2n})\Phi\big\rangle
\end{align}
The computation of this expression practically yields the $\mathcal{P}_n$'s mentioned in the beginning of the present section (with its explicit formulation given in \cite{YGLu2022c}). To illustrate this point effectively,
we will evaluate the expression \eqref{DCT29b1} for $n\in\{1,2,3\}$ in the case of $q=1$. Additionally, we will use the notation $\varepsilon=(\varepsilon(1), \ldots,\varepsilon(m))$ for any $m\in\{\mathbb N\}^*$ and $\varepsilon\in \{-1,1\}^{m}$.

For $n=1$, the set $\{-1,1\}^{2}_+$ consists of precisely one element $\varepsilon =(-1,1)$, and its counterpart is $\{(1,2)\}\in PP(2,\varepsilon)$. So, by denoting $(l_1,r_1)=(1,2)$, we have
\begin{align}\label{DCT29d0}
\big\langle \Phi,A^{\varepsilon(1)}(f_1) A^{\varepsilon(2)}(f_{2})\Phi\big\rangle=&\big\langle \Phi,A(f_1) A^+(f_{2})\Phi\big\rangle\notag\\ \overset{\eqref{DCT05g1}}=&\langle f_1, f_2\rangle=
\sum_{\{(l_1,r_1)\}\in PP(2,\varepsilon)}
\langle f_{l_1}, f_{r_1}\rangle
\end{align}

For $n=2$, the set $\{-1,1\}^{4}_+$ consists of two elements: $\varepsilon_1=(-1,1,-1,1)$ and $\varepsilon_2= (-1,-1,1,1)$. Specifically, $PP(4, \varepsilon _1)$ consists of exactly one element $\{(1,2),(3,4)\}$. So, by denoting $(l_1,r_1)=(1,2)$ and $(l_2,r_2)=(3,4)$, we find
\begin{align}\label{DCT29d1}
&\big\langle \Phi,A^{\varepsilon_1(1)}(f_1) A^{\varepsilon_1(2)}(f_{2})A^{\varepsilon_1(3)}(f_3) A^{\varepsilon_1(4)}(f_{4})\Phi\big\rangle\notag\\
\overset{\eqref{DCT05g1}}=&\big\langle \Phi,A(f_1) A^+(f_{2})A(f_3) A^+(f_{4})\Phi\big\rangle \notag\\ 
=&\langle f_1, f_2\rangle \cdot \langle f_3, f_4\rangle=\sum_{\{(l_h,r_h)\}_{h=1}^2\in PP(4, \varepsilon _1)} \prod_{h=1}^2\langle f_{l_h}, f_{r_h}\rangle
\end{align}
Similarly, due to the fact that $PP(4,\varepsilon_2)$ consists of two elements $\{(1,4), (2,3)\}$ and $\{(1,3), (2,4)\}$, we obtain
\begin{align}\label{DCT29d2}
&\big\langle \Phi,A^{\varepsilon_2(1)}(f_1) A^{\varepsilon_2(2)}(f_{2})A^{\varepsilon_2(3)}(f_3) A^{\varepsilon_2(4)}(f_{4})\Phi\big\rangle\notag\\
=&\big\langle \Phi,A(f_1) A(f_{2})A^+(f_3) A^+(f_{4}) \Phi \big\rangle \overset{\eqref{DCT05g1}}=\langle f_1,f_4\rangle \cdot \langle f_2, f_3\rangle+ \langle f_1, f_3\rangle \cdot \langle f_2, f_4\rangle\notag\\
=&\sum_{\{(l_h,r_h)\}_{h=1}^2\in PP(4, \varepsilon _2)} \prod_{h=1}^2\langle f_{l_h}, f_{r_h}\rangle
\end{align}

Now, let's examine the expression \eqref{DCT29b1} for $n=3$ with $\varepsilon=(-1,-1,-1,1,1,1)$.
\begin{align}\label{DCT29d3}
&\big\langle \Phi,A^{\varepsilon(1)}(f_1) A^{\varepsilon(2)}(f_{2})A^{\varepsilon(3)}(f_3) A^{\varepsilon(4)}(f_{4}) A^{\varepsilon(5)}(f_{5}) A^{\varepsilon(6)}(f_{6})\Phi\big\rangle\notag\\
=&\big\langle \Phi,A(f_1) A(f_{2})A(f_3) A^+(f_{4})A^+(f_5) A^+(f_{6})\Phi\big\rangle\notag\\
\overset{\eqref{DCT05g1}}=&
\langle f_3, f_4\rangle \cdot\big\langle \Phi,A(f_1) A(f_{2})A^+(f_5) A^+(f_{6})\Phi\big\rangle\notag\\
\overset{\eqref{DCT05g1}}=& \langle f_1, f_6\rangle \cdot\langle f_2, f_5\rangle\cdot \langle f_3, f_4\rangle+\langle f_1, f_5\rangle\cdot \langle f_2, f_6\rangle\cdot\langle f_3, f_4\rangle
\end{align}
Let $\mathcal{P}_{3,\varepsilon}:=\big\{\{(1,6),(2,5), (3,4)\}, \{(1,5), (2,6),(3,4)\}\big\}$, the expression in \eqref{DCT29d3} is equal to
\begin{align*}
\sum_{\{(l_h,r_h)\}_{h=1}^3\in{\mathcal P}_{3,\varepsilon} } \prod_{h=1}^3\langle f_{l_h},f_{r_h}\rangle
\end{align*}
Furthermore, 
\begin{align*}&\{(1,6),(2,5),(3,4)\}\\
=&NCPP(6,\varepsilon)\subsetneqq {\mathcal P}_{3,\varepsilon}\subsetneqq PP(6,\varepsilon)=\big\{ \{(h,\sigma(h))\}_{h=1}^3 :\,\sigma\in{\mathfrak S}_n \}\end{align*}

In fact, it is proved in \cite{YGLu2022c}, for any $n\in{\mathbb N}^*$ and $\varepsilon\in  \{-1,1\}^{2n}_+$, there exists unique ${\mathcal P}_{n,\varepsilon}$ (with its explicit formulation provided) such that:

$\bullet$ \eqref{DCT05f} holds;

$\bullet$ for any $\{f_1,\ldots,f_{2n}\}\subset{\mathcal H}$, the mixed moment \eqref{DCT29b1} equals to
\begin{align}\label{DCT29d5x}
\sum_{\{(l_h,r_h)\}_{h=1}^n\in{\mathcal P}_{n,\varepsilon} } \prod_{h=1}^n\langle f_{l_h},f_{r_h}\rangle
\end{align}

As a consequence, for any $f\in{\mathcal H}$,
\begin{align}\label{DCT29d5y}
&\langle\Phi,\big(A(f)+A^+(f)\big)^m\rangle\notag\\
=&\begin{cases}0,&\text{ if $m$ is odd}\\ \sum_{\varepsilon\in\{-1,1\}^{2n} _{+} } \sum_ {\{(l_h,r_h)\}_{h=1}^n\in\mathcal{P}_{n,\varepsilon} } \prod_{h=1}^n\langle f,f\rangle,&\text{ if } m=2n \end{cases}\notag\\
=&\Vert f\Vert^{2n} \begin{cases}0,&\text{ if $m$ is odd}\\ \sum_{\varepsilon\in\{-1,1\}^{2n} _{+} } \vert {\mathcal P}_{n,\varepsilon}\vert,&\text{ if } m=2n \vert \end{cases}
\end{align}
In \cite{YGLu2022b}, the vacuum distribution of the {\it field operator} $A(f)+A^+(f)$ is calculated by using this result and the explicit formulation of ${\mathcal P}_{n,\varepsilon}$'s provided in \cite{YGLu2022c}.

To delve into this further, let's introduce
\begin{align}\label{DCT29d5z1}
{\mathcal P}_{n,k}:=\bigcup_{\varepsilon\in \{-1,1\}^{2n}_{+,k}}{\mathcal P}_{n,\varepsilon}, \qquad\text{for any } n\in{\mathbb N}^*,\, k\in\{1,\ldots,n\}
\end{align}
Due to the equality $\{-1,1\}^{2n} _{+}= \cup_{k=1}^n\{-1,1\}^{2n} _{+,k}$ and the disjointness of $\{-1,1\}^{2n} _{+,k}$'s, one knows that, for any $n\in{\mathbb N}^*$, ${\mathcal P}_{n,k}$'s are disjoint and
\begin{align}\label{DCT29d5z}
&{\mathcal P}_n:=\bigcup_{\varepsilon\in \{-1,1\}^{2n}_+}
{\mathcal P}_{n,\varepsilon}=\bigcup_{k=1}^n \bigcup_{\varepsilon\in \{-1,1\}^{2n}_{+,k}}{\mathcal P}_ {n,\varepsilon}=\bigcup_{k=1}^n{\mathcal P}_{n,k}
\end{align}
In particular, by defining
\begin{align}\label{DCT29d5z2}
P_n:=\vert {\mathcal P}_n\vert\text{ and } P_{n,k}:= \vert{\mathcal P}_{n,k}\vert, \quad\text{ for any } n\in{\mathbb N}^*,\, k\in\{1,\ldots,n\}
\end{align}
one can deduce, thanks to the disjointness of ${\mathcal P}_{n,k}$'s, that
\begin{align}\label{DCT41a2}
P_n=\sum_{k=1}^nP_{n,k},\ 
P_{n,k}=\sum_ {\varepsilon \in\{-1,1\}^{2n} _{+,k} }
\vert{\mathcal P}_{n,\varepsilon}\vert , \ \text{ for any }n\in{\mathbb N}^*,\ k\in\{1,\ldots,n\}
\end{align}
Moreover, for any  $f\in{\mathcal H}^{(1)}:=$the set of all unit--norm elements of ${\mathcal H}$, one finds
\begin{align}\label{DCT41a3}
\vert{\mathcal P}_{n,\varepsilon}\vert
=\big\langle \Phi, A^{\varepsilon(1)}(f)\ldots A^{\varepsilon(2n)} (f) \Phi\big\rangle
\end{align}
Therefore, by combining \eqref{DCT41a2} and \eqref{DCT41a3}, it is concluded that, for any $n\in{\mathbb N}^*,\, k\in\{1,\ldots,n\}$ and $f\in{\mathcal H}^{(1)}$,
\begin{align}\label{DCT41a5}
P_{n,k}=\sum_ {\varepsilon \in\{-1,1\}^{2n} _{+,k} }\big\langle \Phi, A^{\varepsilon(1)}(f)\ldots A^{\varepsilon(2n)} (f) \Phi\big\rangle
\end{align}

Now let's calculate $P_{n,k}$'s.
It is straightforward to compute, for any $f\in \mathcal{H}^{(1)}$, that:
\[P_{1,1}=\sum_{\varepsilon\in\{-1,1\}^2_+}\big\langle \Phi, A^{\varepsilon(1)}(f)A^{\varepsilon(2)} (f) \Phi\big\rangle=\big\langle \Phi, A^{\varepsilon(1)}(f)A^{\varepsilon(2)} (f) \Phi\big\rangle\overset{\eqref{DCT05g1}}=1
\]
\begin{align*}
P_{2,1}=&\sum_{\varepsilon\in\{-1,1\}^2_{+,1}}\big\langle \Phi, A^{\varepsilon(1)}(f)A^{\varepsilon(2)} (f)A^{\varepsilon(3)} (f)A^{\varepsilon(4)} (f) \Phi\big\rangle\\
=&\big\langle \Phi, A(f)A^+(f)A(f)A^+(f)
\Phi\big\rangle\overset{\eqref{DCT05g1}}=1
\end{align*}
and
\begin{align}\label{DCT41a6}
P_{2,2}&=\sum_{\varepsilon\in\{-1,1\}^2_{+,2}}\big\langle \Phi, A^{\varepsilon(1)}(f)A^{\varepsilon(2)} (f)A^{\varepsilon(3)} (f)A^{\varepsilon(4)} (f) \Phi\big\rangle\notag\\
&=\big\langle \Phi, A(f)A(f)A^+(f)A^+(f) \Phi\big\rangle\overset{\eqref{DCT05g1}}=1+q
\end{align}
\begin{proposition}\label{DCT50}Using the notations and conventions established above, we have
\begin{align}\label{DCT50a}
P_{n+1,1}=P_n \text{ and } P_{n+1,2}=(1+q)P_n,\quad \text{for all } n\in{\mathbb N}^*
\end{align}
\end{proposition}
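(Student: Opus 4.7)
The plan is to compute $P_{n+1,k}$ via the vacuum--moment formula \eqref{DCT41a5}: fix any unit vector $f\in\mathcal{H}^{(1)}$, so that for $k\in\{1,2\}$,
\[
P_{n+1,k}=\sum_{\varepsilon\in\{-1,1\}^{2n+2}_{+,k}}\big\langle\Phi,\,A^{\varepsilon(1)}(f)\cdots A^{\varepsilon(2n+2)}(f)\,\Phi\big\rangle.
\]
The defining property \eqref{DCT02c05} of $\{-1,1\}^{2n+2}_{+,k}$ forces the last $k+1$ entries of $\varepsilon$ to be $\varepsilon(2n+2-k)=-1$ followed by $k$ copies of $+1$; hence the rightmost $k+1$ operators in each product are the same and can be applied first to $\Phi$. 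Direct computation from \eqref{creaOn(q,2)a} and \eqref{DCT05g1} yields
\[
A(f)A^+(f)\Phi=\|f\|^2\Phi=\Phi,\qquad A(f)A^+(f)A^+(f)\Phi=A(f)(f\otimes f)=(1+q)\|f\|^2 f=(1+q)A^+(f)\Phi.
\]

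For $k=1$ the plan is to exhibit a bijection $\{-1,1\}^{2n+2}_{+,1}\leftrightarrow\{-1,1\}^{2n}_+$ given by truncating the last two entries. Indeed, if $\varepsilon\in\{-1,1\}^{2n+2}_{+,1}$, its tail $(-1,+1)$ has zero sum and preserves every partial sum from the right, so the truncation lies in $\{-1,1\}^{2n}_+$; conversely, appending $(-1,+1)$ to any element of $\{-1,1\}^{2n}_+$ produces an element of $\{-1,1\}^{2n+2}_{+,1}$. Substituting $A(f)A^+(f)\Phi=\Phi$ then gives
\[
P_{n+1,1}=\sum_{\varepsilon'\in\{-1,1\}^{2n}_+}\big\langle\Phi,\,A^{\varepsilon'(1)}(f)\cdots A^{\varepsilon'(2n)}(f)\,\Phi\big\rangle=\sum_{k=1}^n P_{n,k}=P_n,
\]
by \eqref{DCT41a5} applied in length $2n$ together with the decomposition \eqref{DCT41a2}.

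For $k=2$ the corresponding bijection $\{-1,1\}^{2n+2}_{+,2}\leftrightarrow\{-1,1\}^{2n}_+$ removes the forced tail $(-1,+1,+1)$ and puts a single $+1$ in position $2n$. The reverse direction is well defined because any $\tilde\varepsilon\in\{-1,1\}^{2n}_+$ must satisfy $\tilde\varepsilon(2n)=+1$ (from the partial-sum condition at $p=2n$), so changing this entry to $-1$ and appending $(+1,+1)$ yields an element of $\{-1,1\}^{2n+2}_{+,2}$; the remaining check is the inequality $\sum_{h=p}^{2n-1}\varepsilon(h)\ge -1$ for $p\le 2n-1$, which follows directly from $\varepsilon\in\{-1,1\}^{2n+2}_+$. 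Replacing the final three operators by $(1+q)A^+(f)$ and using $\tilde\varepsilon(2n)=+1$ factors out the scalar $(1+q)$ and reduces the sum to $P_n$, giving $P_{n+1,2}=(1+q)P_n$. The main obstacle is this bijective bookkeeping at the $k=2$ boundary together with the correct identification of the trailing operator action; the operator computation itself is mechanical from \eqref{DCT05g1}.
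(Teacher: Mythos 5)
Your proposal is correct and follows essentially the same route as the paper's proof: both express $P_{n+1,k}$ via \eqref{DCT41a5}, evaluate the forced trailing operators $A(f)A^+(f)\Phi=\Phi$ and $A(f)\big(A^+(f)\big)^2\Phi=(1+q)f$ using \eqref{DCT05g1}, and then identify $\{-1,1\}^{2n+2}_{+,1}$ and $\{-1,1\}^{2n+2}_{+,2}$ bijectively with $\{-1,1\}^{2n}_{+}$ to reduce the sum to $P_n$. Your verification of the bijections is somewhat more explicit than the paper's, but the argument is the same.
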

\begin{proof} For any $n\in{\mathbb N}^*$ and $f\in \mathcal{H}^{(1)}$, \eqref{DCT41a5} yields the following result:
\begin{align}\label{DCT50b}
&P_{n+1,k}=\sum_ {\varepsilon \in\{-1,1\}^{2n+2} _{+,k} }\big\langle \Phi, A^{\varepsilon(1)}(f)\ldots A^{\varepsilon(2n+2)} (f) \Phi\big\rangle\notag\\
=&\begin{cases}
\sum_ {\varepsilon \in\{-1,1\}^{2n+2} _{+,1} }
\big\langle \Phi, A^{\varepsilon(1)}(f)\ldots A^{\varepsilon(2n)} (f)A(f)A^+ (f) \Phi\big\rangle,&\text{ if }k=1\\
\sum_ {\varepsilon \in\{-1,1\}^{2n+2} _{+,2} }
\big\langle \Phi, A^{\varepsilon(1)}(f)\ldots A^{\varepsilon(2n-1)} (f)A(f)\big(A^+ (f)\big)^2 \Phi\big\rangle,&\text{ if }k=2
\end{cases}
\end{align}
For $k=1$ and for any $\varepsilon\in\{-1,1\}^{2n+2} _{+,1}$, we can utilize \eqref{DCT05g1} and the fact
that $\langle f,f\rangle=1$ to obtain:
\[\big\langle \Phi, A^{\varepsilon(1)}(f)\ldots A^{\varepsilon(2n)} (f)A(f)A^+ (f) \Phi\big\rangle
=\big\langle \Phi, A^{\varepsilon(1)}(f)\ldots A^{\varepsilon(2n)} (f)\Phi\big\rangle
\]
Moreover, for any $\varepsilon\in\{-1,1\}^{2n+2}_{+, 1}$, by defining $\varepsilon'$ as restriction of $\varepsilon$ to the set $\{1,\ldots, 2n\}$, it is evident that $\varepsilon'$ ranges over $\{-1,1\}^{2n} _{+}$ as $\varepsilon$ running over $\{-1,1\}^{2n+2} _{+,1}$. This leads to the first equality in \eqref{DCT50b} as follows:
\begin{align*}
P_{n+1,1}=&\sum_ {\varepsilon \in\{-1,1\}^{2n+2} _{+,1} }\big\langle\Phi, A^{\varepsilon(1)}(f) \ldots A^{\varepsilon(2n)}(f)A(f)A^+(f)\Phi\big\rangle \\
=&\sum_ {\varepsilon' \in\{-1,1\}^{2n} _{+} }
\big\langle\Phi, A^{\varepsilon'(1)}(f)\ldots A^{ \varepsilon'(2n)}(f)\Phi\big\rangle=P_n
\end{align*}

Similarly, for $k=2$ and any $\varepsilon\in\{-1,1\} ^{2n+2}_{+,2}$, we can apply \eqref{DCT05g1} and the fact that $\langle f,f\rangle=1$ to get
\begin{align*}&\big\langle \Phi, A^{\varepsilon(1)}(f)\ldots A^{\varepsilon(2n-1)} (f)A(f)\big(A^+ (f)\big)^2 \Phi\big\rangle\\
=&(1+q)\big\langle \Phi, A^{\varepsilon(1)}(f)\ldots A^{\varepsilon(2n-1)} (f)A^+ (f)\Phi\big\rangle
\end{align*}
Furthermore, while $\varepsilon$ varies across $\{-1,1\}^{2n+2} _{+,2}$, the resulting $\varepsilon':= (\varepsilon(1),\ldots,\varepsilon(2n-1),1)$ runs over $\{-1,1\}^{2n} _{+}$. Therefore, we obtain the second equality in \eqref{DCT50b} in the following manner:
\begin{align*}
P_{n+1,2}=&\sum_ {\varepsilon \in\{-1,1\}^{2n+2} _{+,2} }\big\langle\Phi, A^{\varepsilon(1)}(f) \ldots A^{\varepsilon(2n-1)}(f)A(f)\big(A^+ (f)\big)^2 \Phi\big\rangle \\
=&(1+q)\sum_ {\varepsilon' \in\{-1,1\}^{2n} _{+} }
\big\langle\Phi, A^{\varepsilon'(1)}(f)\ldots A^{ \varepsilon'(2n)}(f)\Phi\big\rangle=(1+q)P_n
\end{align*}  \end{proof}

\begin{proposition}\label{DCT51}Using the notations and conventions established above, we can express the following equality:
\begin{align}\label{DCT51a}
P_{n+1,k+1}=\sum_{h=k}^nP_{n,h},\quad \text{for any } n\ge2\text{ and }k\in\{2,\ldots,n\}
\end{align}
In particular,
\begin{align}\label{DCT51a1}
P_{n+1,n+1}=P_{2,2}=1+q,\quad \text{for any } n\in{\mathbb N}^*
\end{align}
\end{proposition}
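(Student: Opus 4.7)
The plan is to extend the argument from the proof of Proposition~\ref{DCT50} to the range $k\ge 2$. Using \eqref{DCT41a5} with any $f\in\mathcal{H}^{(1)}$, every $\varepsilon\in\{-1,1\}^{2n+2}_{+,k+1}$ satisfies $\varepsilon(2n+1-k)=-1$ and $\varepsilon(j)=1$ for $j\ge 2n+2-k$, so
\begin{align*}
\langle\Phi, A^{\varepsilon(1)}(f)\cdots A^{\varepsilon(2n+2)}(f)\Phi\rangle=\langle\Phi, A^{\varepsilon(1)}(f)\cdots A^{\varepsilon(2n-k)}(f)\,A(f)(A^+(f))^{k+1}\Phi\rangle.
\end{align*}
The key observation is that, since $k+1\ge 3$, the third branch of \eqref{DCT05g1} applies and gives $A(f)(A^+(f))^{k+1}\Phi=(A^+(f))^k\Phi$ (no $(1+q)$ factor, in contrast with the $k=1$ case of Proposition~\ref{DCT50}, where $A(f)(A^+(f))^2\Phi=(1+q)f$).

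Next I would introduce the auxiliary $\tilde\varepsilon\in\{-1,1\}^{2n}$ by $\tilde\varepsilon(j):=\varepsilon(j)$ for $j\le 2n-k$ and $\tilde\varepsilon(j):=1$ for $2n-k+1\le j\le 2n$, so the previous mixed moment equals $\langle\Phi, A^{\tilde\varepsilon(1)}(f)\cdots A^{\tilde\varepsilon(2n)}(f)\Phi\rangle$. The heart of the proof, and the step I expect to require the most care, is to verify that the map $\varepsilon\mapsto\tilde\varepsilon$ is a bijection between $\{-1,1\}^{2n+2}_{+,k+1}$ and $\bigcup_{h=k}^{n}\{-1,1\}^{2n}_{+,h}$. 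In the forward direction one computes that the contribution of positions $2n-k+1,\ldots,2n+2$ to $\sum_j\varepsilon(j)$ is exactly $(k+1)-1=k$, so $\sum_j\tilde\varepsilon(j)=0$; the tail--sum inequality $\sum_{j=p}^{2n}\tilde\varepsilon(j)\ge 0$ is inherited directly from the corresponding condition for $\varepsilon$ on $\{1,\ldots,2n+2\}$; and $\max\tilde\varepsilon^{-1}(\{-1\})\le 2n-k$ by construction, forcing the associated index $h$ to satisfy $h\ge k$, while $h\le n$ follows from the standard counting bound $2n-h\ge h$. The inverse map replaces the trailing block of $+1$'s in $\tilde\varepsilon$ (which has length $\ge k$ precisely because $h\ge k$) with the suffix $-1,+1,\ldots,+1$ of length $k+2$, and all constraints transfer back routinely.

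Once the bijection is established, summing over $\varepsilon$ yields
\begin{align*}
P_{n+1,k+1}&=\sum_{\varepsilon\in\{-1,1\}^{2n+2}_{+,k+1}}\langle\Phi, A^{\tilde\varepsilon(1)}(f)\cdots A^{\tilde\varepsilon(2n)}(f)\Phi\rangle\\
&=\sum_{h=k}^{n}\sum_{\tilde\varepsilon\in\{-1,1\}^{2n}_{+,h}}\langle\Phi, A^{\tilde\varepsilon(1)}(f)\cdots A^{\tilde\varepsilon(2n)}(f)\Phi\rangle=\sum_{h=k}^{n}P_{n,h},
\end{align*}
which is \eqref{DCT51a}. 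Finally, for \eqref{DCT51a1}, specializing $k=n$ in \eqref{DCT51a} (legitimate since $n\ge 2$) gives $P_{n+1,n+1}=P_{n,n}$; iterating this recursion down to the base case $P_{2,2}=1+q$ from \eqref{DCT41a6} concludes the argument.
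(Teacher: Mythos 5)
Your proposal is correct and follows essentially the same route as the paper: reduce the rightmost block $A(f)(A^+(f))^{k+1}\Phi$ to $(A^+(f))^{k}\Phi$ via the third branch of \eqref{DCT05g1} (no $(1+q)$ factor since $k+1\ge 3$), then use the truncation map $\varepsilon\mapsto\tilde\varepsilon$ as a bijection onto $\bigcup_{h=k}^{n}\{-1,1\}^{2n}_{+,h}$ and sum, with \eqref{DCT51a1} obtained by setting $k=n$ and iterating down to $P_{2,2}=1+q$. The only difference is that you spell out the verification of the bijection, which the paper simply asserts.
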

\begin{proof}The second equality in \eqref{DCT51a1} coincides with \eqref{DCT41a6}. Moreover, once \eqref{DCT51a} is proven, we obtain first equality in \eqref{DCT51a1} because
\[P_{n+1,n+1}=\sum_{h=n}^nP_{n,h}=P_{n,n}, \quad\text{for any } n\ge2
\]

Now let's proceed to prove \eqref{DCT51a}. For any $n\ge2$, $k\in\{2,\ldots,n\}$ and $\varepsilon\in\{-1,1\} ^{2n+2}_{+,k+1}$, for any $f\in \mathcal{H}^{(1)}$, we have the following:
\begin{align*}
&\big\langle \Phi, A^{\varepsilon(1)}(f)\ldots A^{\varepsilon(2n+2)} (f) \Phi\big\rangle\notag\\
=&\big\langle \Phi, A^{\varepsilon(1)}(f)\ldots A^{\varepsilon(2n-k)} (f) A(f)\big( A^+ (f)\big)^{k+1}\Phi\big\rangle\notag\\
\overset{\eqref{DCT05g1}}=&
\big\langle \Phi, A^{\varepsilon(1)}(f)\ldots A^{\varepsilon(2n-k)} (f) \big( A^+ (f)\big)^{k}\Phi\big\rangle
\end{align*}
Notice that for any $2\le k\le n$, as $\varepsilon$ running over $\{-1,1\}^{2n+2} _{+,k+1}$, the corresponding $\varepsilon' \in\{-1,1\}^{2n}$ defined by
\[\varepsilon'(h):=\begin{cases}
\varepsilon(h), &\text{ if }h\le 2n-k\\ 1,  &\text{ if }2n-k<h\le 2n
\end{cases}
\]
runs over $\cup_{h=k}^n\{-1,1\}^{2n} _{+,h}$. So,
\begin{align*}
P_{n+1,k+1}=&\sum_{\varepsilon\in\{-1,1\}^{2n+2}_{+,k+1}} \big\langle \Phi, A^{\varepsilon(1)}(f)\ldots A^{\varepsilon(2n+2)} (f) \Phi\big\rangle  \\
=&\sum_{h=k}^n\sum_{\varepsilon'\in\{-1,1\}^{2n} _{+,h} } \big\langle \Phi,A^{\varepsilon'(1)}(f)\ldots A^{\varepsilon'(2n)} (f)  \Phi\big\rangle= \sum_{h=k}^nP_{n,h}
\end{align*}\end{proof}

In summary, by denoting
\[x_{n,k}=P_{n+1,k+1},\quad\text{for any } n\in \mathbb{N}^*\text{ and } k\in\{1,\ldots,n\}
\]
\eqref{DCT51a} tells us that $\{x_{n,k}\}_{n\in \mathbb{N}^*, k\in\{1, \ldots,n\} }$ satisfies the Catalan's triangle system:
\begin{align*}
x_{n+1,k+1}=&P_{n+2,k+2}
=\sum_{h=k+1}^{n+1}P_{n+1,h}\notag\\
\overset{j:=h-1}=&\sum_{j=k}^{n}P_{n+1,j+1}
=\sum_{j=k}^{n} x_{n,j},\quad \text{for any } n\in \mathbb{N}^* \text{ and  }k\in\{1,\ldots,n\}
\end{align*}
However, the {\it boundary conditions} are quite different from \eqref{DCT-CTSbou}. In fact, it follows from \eqref{DCT51a1} and \eqref{DCT50a} that
\begin{equation}\label{DCT41a9}
x_{n,n}=x_{1,1}=1+q,\quad x_{n,1}=(1+q)P_n
, \quad \text{ for any } n\in\mathbb{N}^*
\end{equation}

As the end of this section, it's worth noting that the explicate
expressions of $P_n$'s can be found in \cite{YGLu2022d}.

\end{document}